\newtheorem{THM}{Theorem}[section]
\newtheorem{LEM}[THM]{Lemma}
\def\shift(#1)(#2){\!\!\downarrow\!{}^{#1}_{\raise .1ex\vbox to 0pt{\vss\hbox{$\scriptstyle #2$}}}\,}
\def\ucl(#1){\lfloor #1 \rfloor}
\def\dcl(#1){\lceil #1 \rceil}
\def\specrel#1#2{\mathrel{\mathop{\kern0pt #1}\limits_{#2}}}
\def\F{\mathcal F}
\renewcommand\O{\mathcal O}
\renewcommand\P{\mathcal P}
\def\lowfwd #1#2#3{{\mathop{\kern0pt #1}\limits^{\kern#2pt\raise.#3ex
\vbox to 0pt{\hbox{$\scriptscriptstyle\rightarrow$}\vss}}}}
\def\lowbkwd #1#2#3{{\mathop{\kern0pt #1}\limits^{\kern#2pt\raise.#3ex
\vbox to 0pt{\hbox{$\scriptscriptstyle\leftarrow$}\vss}}}}
\def\fwd #1#2{{\lowfwd{#1}{#2}{15}}}
\def\ve{\kern-1.5pt\lowfwd e{1.5}2\kern-1pt}
\def\vedash{{\mathop{\kern0pt e\lower.5pt\hbox{${}
     \scriptstyle'$}}\limits^{\kern0pt\raise.02ex
     \vbox to 0pt{\hbox{$\scriptscriptstyle\rightarrow$}\vss}}}}
\def\ev{\kern-1pt\lowbkwd e{0.5}2\kern-1pt}
\def\vf{\kern-2pt\lowfwd f{2.5}2\kern-1pt}
\def\vfdash{{\mathop{\kern0pt f\raise 1pt\hbox{${}
     \scriptstyle'$}}\limits^{\kern2pt\raise.02ex
     \vbox to 0pt{\hbox{$\scriptscriptstyle\rightarrow$}\vss}}}}
\def\vL{{\vec L}} 
\def\vr{\lowfwd r{1.5}2}
\def\rv{\lowbkwd r02}
\def\vrdash{{\mathop{\kern0pt r\lower.5pt\hbox{${}
     \scriptstyle'$}}\limits^{\kern0pt\raise.02ex
     \vbox to 0pt{\hbox{$\scriptscriptstyle\rightarrow$}\vss}}}}
\def\rvdash{{\mathop{\kern0pt r\lower.5pt\hbox{${}
     \scriptstyle'$}}\limits^{\kern0pt\raise.02ex
     \vbox to 0pt{\hbox{$\scriptscriptstyle\leftarrow$}\vss}}}}
\def\vR{{\vec R}} 
\def\vs{\lowfwd s{1.5}1}
\def\sv{\lowbkwd s{1.5}1}
\def\vsdash{{\mathop{\kern0pt s\lower.5pt\hbox{${}
     \scriptstyle'$}}\limits^{\kern0pt\raise.02ex
     \vbox to 0pt{\hbox{$\scriptscriptstyle\rightarrow$}\vss}}}}
\def\svdash{{\mathop{\kern0pt s\lower.5pt\hbox{${}
     \scriptstyle'$}}\limits^{\kern0pt\raise.02ex
     \vbox to 0pt{\hbox{$\scriptscriptstyle\leftarrow$}\vss}}}}
\def\vsddash{{\mathop{\kern0pt s\lower.5pt\hbox{${}
     \scriptstyle''$}}\limits^{\kern0pt\raise.02ex
     \vbox to 0pt{\hbox{$\scriptscriptstyle\rightarrow$}\vss}}}}
\def\vso{\lowfwd {s_0}11}
\def\svo{\lowbkwd {s_0}02}
\def\vsidash{{\mathop{\kern0pt s_i\kern-3.5pt\lower.3pt\hbox{${}
     \scriptstyle'$}}\limits^{\kern0pt\raise.02ex
     \vbox to 0pt{\hbox{$\scriptscriptstyle\rightarrow$}\vss}}}}
\def\vS{{\hskip-1pt{\fwd S3}\hskip-1pt}} 
\def\vSr{{\vec S}_{\raise.1ex\vbox to 0pt{\vss\hbox{$\scriptstyle\ge\vr$}}}}
\def\vSstar{{\mathop{\kern0pt S\lower-1pt\hbox{$^*$}}\limits^{\kern2pt
     \vbox to 0pt{\hbox{$\scriptscriptstyle\rightarrow$}\vss}}}}
\def\vSdash{{\mathop{\kern0pt S\lower-1pt\hbox{${}
     \scriptstyle'$}}\limits^{\kern2pt\raise.1ex
     \vbox to 0pt{\hbox{$\scriptscriptstyle\rightarrow$}\vss}}}}
\def\vT{{\fwd T1}}
\def\vU{{\vec U}} 
\def\es{\emptyset}
\def\sub{\subseteq}
\def\supe{\supseteq}
\def\sm{\smallsetminus}
\def\td{tree-decom\-po\-si\-tion}
\newcommand\COMMENT[1]{}
\def\?#1{\vadjust{\vbox to 0pt{\vss\vskip-8pt\leftline{%
     \llap{\hbox{\vbox{\pretolerance=-1
     \doublehyphendemerits=0\finalhyphendemerits=0
     \hsize16truemm\tolerance=10000\small
     \lineskip=0pt\lineskiplimit=0pt
     \rightskip=0pt plus16truemm\baselineskip8pt\noindent
     \hskip0pt        
     #1\endgraf}\hskip7truemm}}}\vss}}}
\title{Tangles and the Mona Lisa}
 \author{Reinhard Diestel\and Geoffrey Whittle}
 \date{}
\begin{document}
\abovedisplayshortskip=-3pt plus3pt
\belowdisplayshortskip=6pt

\maketitle

\begin{abstract}\noindent
  We show how an image can, in principle, be described by the tangles of the graph of its pixels.

The tangle-tree theorem provides a nested set of separations that efficiently distinguish all the distinguishable tangles in a graph. This translates to a  small data set from which the image can be reconstructed.

The tangle duality theorem says that a graph either has a certain-order tangle or a tree structure witnessing that this cannot exist. This tells us the maximum resolution at which the image contains meaningful information.
   \end{abstract}

\section{Introduction}\label{sec:intro}

Image recognition and tangles in a graph have a common theme: they seek to identify regions of high coherence, and to separate these from each other. In both cases, the regions are fuzzy and therefore hard to nail down explicitly.

Let $K$ be a complete subgraph of order~$n$ in some graph~$G$. For every separation $\{A,B\}$ of~$G$ of order $k<n/2$ (say), the entire subgraph~$K$ lies squarely on one side or the other: either $A$ or $B$ contains all its vertices. If it lies in~$B$, say, we can {\it orient\/} the separation $\{A,B\}$ as~$(A,B)$ to indicate this.

Now let $H$ be an $n\times n$ grid in the same graph~$G$. Given a separation $\{A,B\}$ of order $k < n/2$, neither of the sides $A,B$ must contain all of~$H$. But one of them will contain more than 9/10 of the vertices of~$H$. If $B$ is that side, we can therefore still orient $\{A,B\}$ as $(A,B)$ to encode this information, even though there is a certain fuzziness about the fact that $H$ lives only essentially in~$B$, not entirely.

An orientation of all the separations of~$G$ of order less than some given integer~$k$ that $H$ induces in this way is called a {\em tangle of order~$k$\/}, or {\em $k$-tangle\/} in~$G$.%
   \footnote{Precise definitions will be given in Section~\ref{sec:defs}.}
   A graph can have many distinct tangles of a given order~-- e.g., as induced by different complete subgraphs or grids.

Tangles were first introduced by Robertson and Seymour~\cite{GMX}. The shift of paradigm that they have brought to the study of connectivity in graphs is that while large cohesive objects such as grids orient all the low-order separations of~$G$ in a consistent way (namely, towards them), it is meaningful to consider consistent orientations of all the low-order separations of~$G$ as `objects' in their own right: one lays down some axioms saying what `consistent' shall mean, and then various sensible notions of consistency give rise to different types of tangle, each a way of `consistently' orienting all the low-order separations, but in slightly different senses of consistency.

If one tangle orients a given separation $\{A,B\}$ as $(A,B)$ and another tangle orients it as $(B,A)$, we say that $\{A,B\}$ {\em distinguishes\/} these tangles. Distinct tangles of the same order~$k$ are always distinguished by some separation: otherwise they would orient all the separations of order~$<k$ in the same way, and thus be identical as tangles.

Since a given tangle, by definition, orients {\em every\/} separation of~$G$ of order~$<k$ for some~$k$, it is a complicated object that is not cheap to store, let alone easy to find. The set of {\em all\/} the tangles of a given graph, therefore, comes as a formidable data set: there can be exponentially many separations of given order,%
   \COMMENT{}
   and this order can take all values between 0 and the order of the graph.

The {\em tangle-tree\/} theorem~\cite{ProfilesNew,GMX}, however, asserts that a linear number of separations of~$G$~-- in fact, no more than~$|G|$~-- suffices to distinguish all the distinguishable tangles in~$G$: whenever two tangles of~$G$ differ at all (i.e., on some separation), they also differ on a separation in this small set. Moreover, the separations in this set are nested, and hence squeeze $G$ into a tree-structure: in the terminology of graph minors, there exists a \td\ of~$G$ such that every maximal tangle `lives in' a node of this tree, and the tree is minimal in that each of its nodes is home to a maximal tangle.

The main features of an image~-- such as the face in a portrait~-- are not unlike tangles: they are coherent regions which, given any way of cutting the picture in two along natural lines, would lie on one side or the other. Less blatant but still natural cutting lines might refine this division into regions, delineating a nose or an eye inside the face, for example. If we think of cutting lines as separations, whose order is the lower the more natural the line is to cut along, then the large and obvious features of the image (such as the faces in it) would orient all the low-order separations (towards them), while the more refined features such as eyes and noses would orient also some higher-order separations towards them. Thus, all it needs in order to capture the natural regions of an image as tangles is to define the order of separations in such a way that natural cutting lines have low order. We shall do this in Section~\ref{sec:apps}.

What does the tangle-tree theorem offer when applied to this scenario? The nested set of separations which it provides translates to a set of cutting lines that do not cross each other. In other words, it finds exactly the right lines that we need in order to divide the picture into smaller and smaller regions without cutting through coherent features prematurely. The nodes of the decomposition tree will correspond to the regions of the image that are left when all the lines are drawn. Just as the tangles can be identified from just the nested set of separations provided by the tangle-tree theorem, the distinctive regions~-- the features~-- of the image can be recovered from these carefully chosen cutting lines.\looseness=-1

These regions are likely to differ in importance: some are carved out early, by particularly natural lines, and would correspond to a tree node that is home to a tangle that has low order but is nevertheless maximal (is not induced by a higher-order tangle). Others are smaller, and appear only as the refining feature (such as an eye) of what was a region cut out by some obvious early lines (such as a face). For example, a portrait showing a face before a black backdrop would, at a modest resolution, capture the backdrop as a low-order tangle that does not get refined further, while the face is another low-order tangle that will be refined by several higher-order tangles (one each for an eye, the nose, an ear etc), which in turn may get refined further: maybe just one or two levels further in the case of the nose, but several levels for the eyes, so as to capture the details of the iris if these are clearly distinct.)

There is another theorem about tangles that can throw a light on the features of an image: the tangle duality theorem~\cite{TangleTreeAbstract,GMX}.  This theorem tells us something about the structure of the graph if it has {\em no\/} tangle of some given order~$k$: it then has an overall tree structure, a {\em\td\/} into small parts. In our application, this tree structure would once more come as a set of non-crossing cutting lines. But this time the regions left by these lines are small: too small to count as features. 

For example, a picture showing a pile of oranges might have many tangles up to some order~$k$, one for each orange, which are carved out by cutting lines around those oranges. But as soon as we allow less obvious cutting lines than these, we would admit lines cutting right through an orange. Since a single orange does not have interesting internal features, these lines would criss-cross through the oranges indiscriminately, without any focus giving them consistency. In tangle terms, this would be because there simply is no tangle of higher order inducing (i.e., refining) the tangle of an orange.%
   \COMMENT{}
   The tangle duality theorem would then point this out by cutting up the entire image into single pixels, in a tree-like way. In the language of complexity theory: the \td\ which the theorem assures us to exist if there is no high-order tangle other than those focusing on a single pixel will be an efficiently checkable witness to the fact there {\em exists no\/} such tangle: that we were not just not clever enough to spot it.\looseness=-1

This paper is organised as follows. In Section~\ref{sec:defs} we give a reasonably self-contained introduction to the theory of separation systems and abstract tangles in discrete structures (not just graphs), which leads up to formally precise statements of the versions of the tangle-tree theorem and the tangle duality theorem that we shall wish to apply. In Section~\ref{sec:apps} we translate the scenario of image recognition and image compression into the language of abstract separation systems and tangles, and apply the two theorems.

We wish to stress that we see this paper in no way as a serious contribution to any practical image recognition or compression problem. Rather, it is our hope to inspire those that know more about these things than we do, by offering some perhaps unexpected and novel ideas from the world of tangles. Its character, therefore, is deliberately one of `proof of concept'. In particular, the correspondence that we shall describe between the features of an image and the tangles in a suitable abstract separation system is the simplest possible one that we could think of: it works, but it can doubtless be improved. But this we leave to more competent hands.

\section{Abstract separation systems and tangles}\label{sec:defs}

Our aim in this section is to give a brief introduction to so-called abstract separation systems and their tangles, just enough to state precisely the two theorems we wish to apply to images recognition: the tangle-tree theorem and the tangle duality theorem. In the interest of brevity, we describe the technical setup more or less directly, without much additional motivation of why things are defined the way they are. Readers interested in this will find such discussions in~\cite{CDHH13CanonicalAlg, TreeSets, TangleTreeAbstract, ProfilesNew}.

\subsection{Separations of sets}\label{subsec:basics}

Given an arbitrary set~$V\!$, a {\em separation\/} of $V$ is a set $\{A,B\}$ of two subsets $A,B$ such that $A\cup B = V\!$. Every such separation $\{A,B\}$ has two \emph{orientations}: $(A,B)$ and $(B,A)$. 
Inverting these is an involution $(A,B) \mapsto (B,A)$ on the set of these \emph{oriented separations} of~$V\!$.

The oriented separations of a set are partially ordered as
\begin{equation*}
(A,B) \leq (C,D) :\Leftrightarrow A \subseteq C\text{ and } B\supseteq D.
\end{equation*}
Our earlier involution reverses this ordering:
\begin{equation*}
(A,B)\leq (C,D) \Leftrightarrow (B,A)\ge (D,C).
\end{equation*}

The oriented separations of a set form a lattice under this partial ordering, in which $(A\cap C, B\cup D)$ is the infimum of $(A,B)$ and~$(C,D)$, and $(A\cup C, B\cap D)$ is their supremum. The infima and suprema of two separations of a graph or matroid are again separations of that graph or matroid, so these too form lattices under~$\le$.

Their induced posets of all the oriented separations of order~$<k$ for some fixed~$k$, however, need not form such a lattice: when $(A,B)$ and $(C,D)$ have order~$<k$, this need not be the case for $(A\cap C, B\cup D)$ and~$(A\cup C, B\cap D)$.

\subsection{Abstract separation systems}\label{subsec:SeparationSystems}

A {\em separation system\/} $(\vS,\le\,,\!{}^*)$ is a partially ordered set $\vS$ with an order-reversing involution${\,}^*$.

The elements of a separation system~$\vS$ are called {\em oriented separations\/}. When a given element of $\vS$ is denoted as~$\vs$, its {\em inverse\/}~$\vs^*$ will be denoted as~$\sv$, and vice versa. The assumption that${\,}^*$ be {\em order-reversing\/} means that, for all $\vr,\vs\in\vS$,
\begin{equation}\label{invcomp}
\vr\le\vs\ \Leftrightarrow\ \rv\ge\sv.
\end{equation}
For subsets $R\sub\vS$ we write $R^*:=\{\,\rv\mid \vr\in R\,\}$.

An (unoriented) {\em separation\/} is a set of the form $\{\vs,\sv\}$, and then denoted by~$s$.%
   \footnote{To smooth the flow of the narrative we usually also refer to oriented separations simply as `separations' if the context or use of the arrow notation~$\vs$ shows that they are oriented.}
   We call $\vs$ and~$\sv$ the {\em orientations\/} of~$s$. If $\vs=\sv$, we call $s$ and~$\vs$ {\em degenerate.}

When a separation is introduced ahead of its elements and denoted by a single letter~$s$, its elements will then be denoted as $\vs$ and~$\sv$.
   Given a set $R\sub S$ of separations, we write $\vR := \bigcup R\sub\vS$%
   \COMMENT{}
   for the set of all the orientations of its elements. With the ordering and involution induced from~$\vS$, this is again a separation system.%
   \COMMENT{}

A separation $\vr\in\vS$ is {\em trivial in~$\vS$\/}, and $\rv$ is {\em co-trivial\/}, if there exists $s \in S$ such that $\vr < \vs$ as well as $\vr < \sv$. Note that if $\vr$ is trivial in~$\vS$ then so is every $\vrdash \le \vr$. An unoriented separation with no trivial orientation is {\em nontrivial\/}.
A separation $\vs$ is {\em small\/} if $\vs\le\sv$.  An unoriented separation is {\em proper\/} if it has no small orientation.

If there are binary operations $\vee$ and~$\wedge$ on a separation system $(\vU,\le\,,\!{}^*)$ such that $\vr\vee\vs$ is the supremum and $\vr\wedge\vs$ the infimum of $\vr$ and~$\vs$ in~$\vU$, we call $(\vU,\le\,,\!{}^*,\vee,\wedge)$ a {\em universe\/} of (oriented) separations. By~\eqref{invcomp}, it satisfies De~Mor\-gan's law:
\begin{equation}\label{deMorgan}
   (\vr\vee\vs)^* =\> \rv\wedge\sv.
\end{equation}%
   \COMMENT{}
    The universe~$\vU$ is {\em submodular\/} if it comes with a submodular \emph{order function}, a real function $\vs\mapsto |\vs|$ on~$\vU$%
   \COMMENT{}
   that satisfies $0\le |\vs| = |\sv|$ and 
 $$|\vr\vee\vs| + |\vr\wedge\vs|\le |\vr|+|\vs|$$
for all $\vr,\vs\in \vU$. We call $|s| := |\vs|$ the \emph{order} of $s$ and of~$\vs$. For every integer~$k>0$, then,
 $$\vS_k := \{\,\vs\in \vU : |\vs| < k\,\}$$
is a separation system~-- but not necessarily a universe, because $\vS_k$ is not normally closed under $\land$ and $\lor$.

Separations of a set~$V\!$, and their orientations, are clearly an instance of this abstract setup if we identify $\{A,B\}$ with $\{(A,B),(B,A)\}$. The small separations of~$V$ are those of the form~$(A,V)$, the proper ones those of the form $\{A,B\}$ with $A\sm B$ and $B\sm A$ both nonempty.

The separations of~$V\!$ even form a universe: if $\vr = (A,B)$ and $\vs = (C,D)$, say, then $\vr\vee\vs := (A\cup C, B\cap D)$ and $\vr\wedge\vs := (A\cap C, B\cup D)$ are again separations of~$V\!$, and are the supremum and infimum of $\vr$ and~$\vs$, respectively.

\subsection{Orienting a separation system}\label{subsec:orient}

Given a separation system $(\vS,\le\,,\!{}^*)$, a subset $O\sub\vS$ is an \emph{orientation} of $S$ (and of~$\vS$) if $O \cup O^\ast = \vS$ and $\lvert O\cap \{\vs,\sv \} \rvert = 1$ 
for all $s \in S$. Thus, $O$~contains exactly one orientation of every separation in~$S$. For subsets $R\sub S$ we say that $O$ {\em induces\/} and {\em extends\/} the orientation $O\cap\vR$ of~$R$, and thereby {\em orients\/}~$R$.

A set $O\sub\vS$ is {\em consistent\/} if there are no distinct $r,s \in S$ with orientations $\vr < \vs$ such that $\rv, \vs \in O$. Consistent orientations of~$\vS$ contain all its trivial separations.%
   \COMMENT{}

We say that $s\in S$ \emph{distinguishes} two orientations $O,O^\prime$ of subsets of~$S$ if $s$ has orientations $\vs \in O$ and $\sv \in O'$. (We then also say that $\vs$ and~$\sv$ themselves distinguish $O$ from~$O'$.) The sets $O,O'$ are {\em distinguishable\/} if neither is a subset of the other: if there exists some $s\in S$ that distinguishes them. A~set $T\sub S$ {\em distinguishes}~$O$ from~$O'$ if some $s\in T$ distinguishes them, and $T$ {\em distinguishes\/} a set~$\O$ of orientations of subsets of~$S$ if it distinguishes its elements pairwise.

Distinct orientations of the same separation system~$S$ are distinguished by every $s\in S$ on which they disagree, i.e.\ for which one of them contains~$\vs$, the other~$\sv$. Distinct orientations of different separation systems, however can be indistinguishable. For $R\subsetneq S$, for example, the orientations of~$S$ will be indistinguishable from the orientations of~$R$ they induce.

\subsection{Tree sets of separations}\label{subsec:nested}

Given a separation system $(\vS,\le\,,\!{}^*)$, two separations $r,s\in S$ are {\em nested\/} if they have comparable orientations; otherwise they \emph{cross}. Two oriented separations $\vr,\vs$ are {\em nested\/} if $r$ and~$s$ are nested. We say that $\vr$ {\em points towards\/}~$s$, and $\rv$ {\em points away from\/}~$s$, if $\vr\le\vs$ or $\vr\le\sv$. Then two nested oriented separations are either comparable, or point towards each other, or point away from each other. 

A~set of separations is {\em nested\/} if every two of its elements are nested. Two sets of separations are {\em nested\/} if every element of the first set is nested with every element of the second. A {\em tree set\/} is a nested separation system without trivial or degenerate elements.%
   \COMMENT{}
   A~tree set is {\it regular\/} if none of its elements is small~\cite{TreeSets}. When $\vT\sub\vS$ is a tree set, we also call $T\sub S$ a {\em tree set\/} (and {\em regular\/} if~$\vT$ is regular).

For example, the set of orientations $(u,v)$ of the edges $uv$ of a tree~$T$ form a regular tree set with respect to the involution $(u,v)\mapsto (v,u)$ and the \emph{natural partial ordering} on~$\vec E(T)$: the ordering in which $(x,y) < (u,v)$ if $\{x,y\}\ne\{u,v\}$%
   \COMMENT{}
   and the unique $\{x,y\}$--$\{u,v\}$ path in $T$ joins $y$ to~$u$.
   The oriented bipartitions of~$V(T)$ defined by deleting an edge of~$T$ form a tree set isomorphic to this one.%
   \COMMENT{}

Note that, in this latter example, the nodes of the tree~$T$ correspond bijectively to the consistent orientations of its edge set: orienting every edge towards some fixed node~$t$ is consistent, and conversely, every consistent orientation of the edges of a tree has a unique sink~$t$ towards which every edge is oriented.
Given an arbitrary tree set of separations, we may thus think of its consistent orientations as its `nodes', and of its elements as edges between these nodes, joining them up into a graph-theoretical tree. 

Every consistent orientation of a separation system~$\vS$ can be recovered from the set~$\sigma$ of its maximal elements: it is precisely the down-closure of~$\sigma$ in~$\vS$. Hence the consistent orientations of a tree set can be represented uniquely by just their sets of maximal elements. Such a set~$\sigma$ will always be a star of separations. The stars $\sigma\sub\vT$ in a tree set~$\vT$ of this form, those for which $\vT$ has a consistent orientation whose set of maximal elements is precisely~$\sigma$, are the {\em splitting stars\/} of the tree set~$\vT$ or of~$T$. The splitting stars in the set of oriented edges of a graph-theoretical tree are precisely the stars $\sigma_t$ of all the edges at a fixed node~$t$, oriented towards~$t$.~\cite{TreeSets}

\subsection{Profiles of separation systems}\label{subsec:profiles}

Let $(\vS,\le\,,\!{}^*)$ be a separation system inside a universe $(\vU,\le\,,\!{}^*,\vee,\wedge)$.%
   \COMMENT{}
 An orientation $P$ of~$S$ is a \emph{profile} (of~$S$ or~$\vS$) if it is consistent  and satisfies
 $$\text{For all $\vr,\vs \in P$ the separation $\rv \land \sv = (\vr\lor\vs)^*$ is not in $P$.}\eqno\rm(P)$$
Thus if $P$ contains $\vr$ and~$\vs$ it also contains $\vr\lor\vs$, unless $\vr\lor\vs\notin\vS$.%
  \COMMENT{}

Most natural orientations of a separation system that orient all its elements `towards' some large coherent structure (see the Introduction) will be both consistent and satisfy~(P): two oriented separations pointing away from each other (as in the definition of consistency) will hardly both point towards that structure, and similarly if $\vr$ and $\vs$ both point to the structure then their supremum~$\vr\lor\vs$ will do so too if it is in~$\vS$: otherwise the structure would be squeezed between $\vr$ and $\vs$ on the one hand, and $(\vr\lor\vs)^* = \rv\land\sv$ on the other, and could hardly be large. In our intended application to images, all orientations of some set of dividing lines of the picture towards one of its features will trivially satisfy~(P) and thus be a profile.

A profile is {\em regular\/} if it contains no separation whose inverse is small. Since all our nontrivial separations will be proper, all our profiles will be regular.%
   \COMMENT{}

Note that every subset $Q$ of a profile of~$\vS$ is a profile of~$\vR = Q\cup Q^*\sub\vS$.%
   \COMMENT{}
   Put another way, if $P$ is a profile of~$S$ and $R\sub S$, then $P\cap\vR$ is a profile of~$R$, which we say is {\em induced\/} by~$P$.

When $\vU$ is a universe of separations, we call the profiles of the separation systems $\vS_k\sub\vU$ of its separations of order~$<k$ the {\em $k$-profiles in\/}~$\vU$. For $\ell < k$, then, every $k$-profile $P$ in~$\vU$ induces an~$\ell$-profile.

\subsection{The tangle-tree theorem for profiles}\label{subsec:TTThm}

We are nearly ready to state the tangle-tree theorems for profiles. These come as a pair: in a canonical version, and in a refined, but non-canonical, version.

The canonical version finds, in any submodular universe~$\vU$ of separations, a `canonical' tree set of separations that distinguishes all the `robust and regular' profiles in~$\vU$. Here, a tree set $\vT\sub\vU$ is {\em canonical\/} if  the map $\vU\mapsto \vT$ commutes with isomorphisms of universes of separations, which are defined in the obvious way~\cite{ProfilesNew}. The definition of `robust' is more technical, but we do not need it: all we need is the following immediate consequence of its definition:

\begin{LEM} {\rm\cite{ProfilesNew}}
Every profile of a set of bipartitions of a set is robust.\qed
\end{LEM}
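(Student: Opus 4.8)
The plan is to recall the definition of a \emph{robust} profile from~\cite{ProfilesNew} and then verify it by a direct computation, exploiting the one way in which bipartitions of a set differ from arbitrary separations of a set.

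First I would note that the bipartitions $\{C,D\}$ of a set~$V$ (the separations with $C\cap D=\es$) form a submodular universe~$\vU$: their orientations are closed under the join $(C,D)\vee(C',D')=(C\cup C',\,D\cap D')$ and the meet of the universe of all separations of~$V$, and they inherit its order function. The decisive feature of~$\vU$ is that \emph{every} $t$ in it satisfies
\[
\vt\wedge\tv=(\es,V)\qquad\text{and}\qquad\vt\vee\tv=(V,\es),
\]
the bottom and top of~$\vU$, whereas in the universe of all separations of a set the join $\vt\vee\tv$ is merely co-small, not the top. This collapse is what will make robustness automatic.

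Recall from~\cite{ProfilesNew} that a $k$-profile $P$ is \emph{robust} if for every $\vr\in P$ and every separation $\vt$ of the ambient universe for which the joins $\vr\vee\vt$ and $\vr\vee\tv$ both have order~$<k$, at least one of $\vr\vee\vt$, $\vr\vee\tv$ lies in~$P$. Given a $k$-profile $P$ in the universe~$\vU$ of bipartitions of~$V$, I would argue as follows. Take $\vr\in P$ and a bipartition~$\vt$ with $\vr\vee\vt,\,\vr\vee\tv\in\vS_k$, and suppose for a contradiction that neither join lies in~$P$. Since $P$ orients~$\vS_k$ and both joins lie there, their inverses $\rv\wedge\tv$ and $\rv\wedge\vt$ both lie in~$P$. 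Writing $\vr=(A,B)$ and $\vt=(C,D)$ with $C\cup D=V$ and $C\cap D=\es$, one computes $\rv\wedge\tv=(B\cap D,\,A\cup C)$ and $\rv\wedge\vt=(B\cap C,\,A\cup D)$, and hence
\[
(\rv\wedge\tv)\vee(\rv\wedge\vt)\;=\;\bigl((B\cap D)\cup(B\cap C),\ (A\cup C)\cap(A\cup D)\bigr)\;=\;(B,A)\;=\;\rv .
\]
This join thus has order $|\rv|=|r|<k$, so it lies in~$\vS_k$, and property~(P), applied to the two elements $\rv\wedge\tv,\,\rv\wedge\vt$ of~$P$, forces $\rv\in P$ — contradicting $\vr\in P$, since a bipartition of a nonempty set is nondegenerate and $P$ is an orientation. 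So $P$ is robust.

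The entire content is then the one-line set computation above together with a single use of~(P); the step I expect to require the most care is simply invoking the definition of robustness in exactly the form used in~\cite{ProfilesNew}. Should that form also quantify over a second separation $\vs\in P$, I do not anticipate any difficulty: the same identity $\vt\vee\tv=(V,\es)$, together with $\vr\vee\rv=(V,\es)$ (again because $r$ is a bipartition), reduces the join in question either to $\rv$, contradicting $\vr\in P$ as before, or to the top separation $(V,\es)$, which belongs to no consistent orientation of~$\vS_k$ because such an orientation contains the trivial separation $(\es,V)$.
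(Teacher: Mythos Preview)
The paper does not actually prove this lemma: it quotes it from~\cite{ProfilesNew} and closes with a~\qed. Your argument is correct and is essentially the standard one. The identity $(\rv\wedge\tv)\vee(\rv\wedge\vt)=\rv$ is exactly the point, and it holds precisely because $t$ is a \emph{bipartition} (so that $C\cup D=V$ and $C\cap D=\es$); for arbitrary set separations this join is only~$\ge\rv$, and robustness can genuinely fail. One cosmetic remark: your appeal to~(P) is cleanest if phrased as ``(P) gives $(\rv)^*=\vr\notin P$'', which is already the desired contradiction with $\vr\in P$, rather than via the detour ``forces $\rv\in P$''. Your closing hedge about the precise form of the robustness condition in~\cite{ProfilesNew} is prudent, but the computation you give is the one that matters in any of its variants.
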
%
   \COMMENT{}

Finally, the tree set $T$ found by the tangle-tree theorems will distinguish the relevant profiles $P,P'$ {\em efficiently\/}: from among all the separation in~$S$ that distinghish $P$ from~$P'$, the set $T$ will contain one of minimum order.

Here, then, is the canonical tangle-tree theorem for profiles in its simplest form~\cite[Corollary~3.8]{ProfilesNew}:

\begin{THM}\label{CTTThm} {\rm (The canonical tangle-tree theorem for profiles)}\\
Let $\vU = (\vU,\le\,,\!{}^*,\vee,\wedge,|\ |)$ be a submodular universe of separations. There is a canonical%
   \COMMENT{}
   regular tree set $T\sub \vU$ that efficiently distinguishes all the distinguishable robust and regular profiles in~$\vU$.
\end{THM}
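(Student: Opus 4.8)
The plan is to build $T$ by an iteration over the separation orders $k=0,1,2,\dots$, maintaining at each stage a \emph{canonical} nested set $N_k\sub\vU$ of separations of order~$<k$ such that $N_k$ efficiently distinguishes every pair of distinguishable robust regular profiles in~$\vU$ whose minimum-order distinguisher has order~$<k$. I would start with $N_0:=\es$ and, given $N_k$, obtain $N_{k+1}=N_k\cup M_k$ by adding a canonically chosen nested set $M_k$ of separations of order exactly~$k$; in the limit I put $T:=\bigcup_k N_k$. Since any two distinguishable profiles are distinguished by a separation of some finite order, $T$ then distinguishes all the relevant profiles, and it does so efficiently because every separation ever added is, by design, of minimum order among those distinguishing the pair it is added for.

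For the inductive step, fix $k$ and consider a pair $\{P,P'\}$ of distinguishable robust regular profiles whose minimum-order distinguisher has order exactly~$k$; the set $D_{P,P'}$ of order-$k$ separations distinguishing $P$ from~$P'$ is then nonempty. The engine of the argument is an uncrossing lemma: if $\vr\in D_{P,P'}$, say with $\vr\in P$ and $\rv\in P'$, and $\vs$ is a separation of order~$\le k$ crossing~$r$, then submodularity gives $|\vr\lor\vs|+|\vr\land\vs|\le|\vr|+|\vs|\le 2k$, so among the four corner separations built from $\{\vr,\rv\}$ and $\{\vs,\sv\}$ at least one of order~$\le k$ is available; and using consistency, the down-closedness of consistent orientations, and the profile property~(P) for $P$ and for~$P'$, one checks that a suitable such corner still distinguishes $P$ from~$P'$ — or else distinguishes one of them from some third robust regular profile, which one accommodates by an extremal choice of the pair being treated — while being nested with~$s$. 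It is exactly where the relevant corner might fail to lie in the separation system $\vS_k$ carrying~$P$ that the hypothesis of \emph{robustness} is needed; this is the only use made of it, and the cited lemma (every profile of a set of bipartitions is robust) then ensures the theorem applies to the profiles arising in the image application. Iterating the uncrossing, any distinguisher of $\{P,P'\}$ can be replaced by one nested with all of~$N_k$.

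To keep the construction \emph{canonical}, instead of picking one distinguisher per pair — which would force an arbitrary well-ordering of pairs or of separations, as in the non-canonical companion theorem — I would feed \emph{all} efficient order-$k$ distinguishers of all relevant pairs that are nested with~$N_k$ into the uncrossing procedure at once, resolving crossings by a rule phrased purely in terms of $\le$, ${}^*$, $\lor$, $\land$ and $|\ |$, and let $M_k$ be the outcome. One then has to show that (i) the procedure terminates, (ii) $M_k$ is nested and nested with~$N_k$, and (iii) $M_k$ still distinguishes every order-$k$ pair of robust regular profiles. Because every step is expressed through operations preserved by isomorphisms of universes, which also carry profiles to profiles, robust to robust and regular to regular, the map $\vU\mapsto T$ commutes with such isomorphisms, so $T$ is canonical. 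Finally $T$ is a \emph{regular} tree set: it is nested by construction, and a separation distinguishing two distinct regular profiles can be neither degenerate, nor trivial (a consistent orientation contains every trivial separation, so trivial separations distinguish nothing), nor small (regularity of the two profiles forbids either orientation of a small separation from lying in the profile that would have to contain it).

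The step I expect to be the main obstacle is precisely this \emph{simultaneous, canonical} uncrossing at a fixed order~$k$: resolving one crossing at a time, and processing pairs in an arbitrarily fixed order, is routine and yields the non-canonical version, but resolving all crossings together without circular dependencies, by a rule that refers only to the abstract data $(\vU,\le\,,{}^*,\lor,\land,|\ |)$ and to no external ordering, is what gives the canonical theorem its force. The submodular inequality and property~(P) supply the local content; the bookkeeping that globalises them canonically is the real work.
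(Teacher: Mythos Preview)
The paper does not prove Theorem~\ref{CTTThm}: it is stated with a citation to \cite[Corollary~3.8]{ProfilesNew} and then simply used. So there is no ``paper's own proof'' to compare your proposal against.

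That said, your outline is broadly faithful to how the result is actually proved in the cited source: one does build the tree set in layers by increasing order, uncrossing efficient distinguishers via submodularity, invoking robustness precisely when a corner separation might leave~$\vS_k$, and achieving canonicity by treating all efficient distinguishers of a given order simultaneously rather than picking one per pair. You have also correctly identified the genuine difficulty --- the canonical simultaneous uncrossing at fixed order --- and your remarks on why $T$ is a regular tree set are accurate. What your sketch does not supply is the actual mechanism that makes the simultaneous uncrossing terminate and stay nested (in the source this goes through a notion of ``extremal'' or ``good'' separations and a careful potential argument), so as written it is a plan rather than a proof; but the plan points in the right direction.
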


Note that Theorem~\ref{CTTThm} finds a tree set of separations that distinguishes {\em all\/} the profiles in~$\vU$ that we can ever hope to distinguish. Sometimes, however, we only need to distinguish some of them, say those in some particular set~$\P$ of profiles. In this case a smaller tree set $T$ of separations will suffice. The non-canonical tangle-tree theorem says that, if we do not care about canonicity, we can make $T$ as small as we theoretically can: so small that deleting any one separation from~$T$ will lose it its property of distinguishing~$\P$.%
   \COMMENT{}

As a nice consequence of this minimality, the non-canonical tangle-tree theorem assigns the profiles it distinguishes, those in~$\P$, to the `nodes of the tree' which $T$ defines. Since $T$ is just a tree set these are, of course, undefined. However, we saw in Section~\ref{subsec:nested} that {\em if\/} a tree set of separations is the edge set of a tree, or equivalently the set of nested bipartition of the vertices of a tree induced by its edges, then the nodes of this tree correspond to exactly the consistent orientations of its edges.

As $T\sub U$, every profile in~$\vU$ will orient some of the separations in~$T$, and it will do so consistently. Such a consistent partial orientation of~$T$ will, in general, extend to many consistent orientations of all of~$T$. The partial orientations induced by the profiles in~$\P$, however, extend uniquely, and thus lead to a bijection mapping $\P$ to the `nodes' of~$T$, its consistent orientations:

\begin{THM}\label{TTThm} {\rm (The non-canonical tangle-tree theorem for profiles~\cite{ProfilesNew})}\\
   Let $\vU = (\vU,\le\,,\!{}^*,\vee,\wedge,|\ |)$ be a submodular universe of separations. For every set $\P$ of pairwise distinguishable robust and regular profiles in~$\vU$ there is a regular%
   \COMMENT{}
   tree set $T\subseteq \vU$ of separations such that:
   \begin{enumerate}[{\rm (i)}]\itemsep0pt
      \item every two profiles in~${\P}\!$ are efficiently distinguished by some separation~in~$T$;
      \item no proper subset of~$T$ distinguishes~$\P$;
      \item for every $P\in\P$ the set $P\cap\vT$ extends to a unique consistent orientation\penalty-200\ $O_P$ of~$T$. This map $P\mapsto O_P$ from $\P$ to the set $\O$ of consistent orientations of~$T$ is bijective.
   \end{enumerate}
\end{THM}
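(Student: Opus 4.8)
The plan is to obtain Theorem~\ref{TTThm} as a refinement of its canonical counterpart, Theorem~\ref{CTTThm}: first produce a tree set that distinguishes~$\P$ efficiently, then trim it to a minimal one, and finally read off~(iii) from the correspondence between the consistent orientations of a tree set and its `nodes' set up in Section~\ref{subsec:nested}.

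First I would apply Theorem~\ref{CTTThm} to the submodular universe~$\vU$ to get a regular tree set $N\sub\vU$ that efficiently distinguishes all distinguishable robust and regular profiles in~$\vU$. Since the profiles in~$\P$ are robust, regular and pairwise distinguishable, $N$ in particular efficiently distinguishes~$\P$. Next I would thin $N$ down to a subset $T\sub N$ that still distinguishes~$\P$ while no proper subset of it does; as nestedness, non-triviality, non-degeneracy and non-smallness all survive the passage to subsets, $T$ is again a regular tree set, and its minimality is precisely~(ii). What does not come for free is efficiency~(i): the thinning must leave, for every pair $P,P'\in\P$, a separation of order minimum among all separations of~$\vU$ distinguishing $P$ from~$P'$. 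Here I would use that the separations of~$N$ distinguishing $P$ from~$P'$ form, when oriented towards~$P$, a $\le$-chain -- they lie on the `path' of the tree set between the two nodes -- and likewise for those of~$T$; I would then carry out the thinning carefully, discarding a separation only when each pair it was needed for retains an efficient distinguisher, if necessary first keeping the efficient distinguishers that~$N$ already provides for those pairs (these lie in~$N$, hence are nested with everything else kept). Checking that this process still terminates at a $\sub$-minimal set is, I expect, the main obstacle, and I would take the sharp form of the argument from~\cite{ProfilesNew}.

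For~(iii), fix $P\in\P$. That $P\cap\vT$ is consistent is immediate, since $P$ is consistent and consistency passes to subsets. The substantial point is that $P\cap\vT$ extends to a \emph{unique} consistent orientation~$O_P$ of~$T$: by Section~\ref{subsec:nested} the consistent orientations of~$T$ are its `nodes', and two of them differ exactly on the separations on the `path' between them, which again form a chain under a common orientation. If $P\cap\vT$ extended to two consistent orientations, every separation on which they differed would have order $\ge|P|$, and I would combine this with the profile property~(P) of~$P$ and the minimality of~$T$ to force a contradiction -- that is, to show that $P$ already pins~$T$ down to a single node. Injectivity of $P\mapsto O_P$ is then easy: $O_P=O_{P'}$ would mean that no separation of~$T$ distinguishes $P$ from~$P'$, contrary to~(i). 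For surjectivity onto the set~$\O$ of \emph{all} consistent orientations of~$T$: if some node were not hit, then inspecting the separations of~$T$ incident with it and invoking~(ii) one locates a separation of~$T$ that distinguishes no pair of~$\P$, contradicting minimality -- the leaf case being straightforward and the interior case again drawing on the profile axioms. Alongside the efficiency-versus-minimality trade-off in the previous step, this uniqueness-of-extension claim is where I would expect to have to work hardest.
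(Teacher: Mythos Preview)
The paper does not prove Theorem~\ref{TTThm}: it is stated with a citation to~\cite{ProfilesNew} and used as a black box, so there is no proof here to compare your proposal against.

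On the substance of your sketch: the overall shape---start from the canonical tree set of Theorem~\ref{CTTThm}, thin it to a minimal subset, then read off~(iii) via the nodes--orientations correspondence of Section~\ref{subsec:nested}---is a reasonable plan, and is in the spirit of how such non-canonical refinements are obtained in~\cite{ProfilesNew}. But you have correctly located the real difficulty and then not resolved it. The tension between~(i) and~(ii) is genuine: a separation $s\in N$ may be the \emph{only} efficient distinguisher in~$N$ for some pair $P,P'$, yet be redundant for \emph{distinguishing}~$\P$ because some other $t\in N$ of higher order also separates $P$ from~$P'$. Na\"ive thinning to a $\sub$-minimal distinguishing set could then drop~$s$ and destroy efficiency; conversely, insisting on keeping all efficient distinguishers can break minimality. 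Your proposal at this point is essentially ``take the sharp form of the argument from~\cite{ProfilesNew}'', which is a citation, not a proof. The actual argument there shows that one can choose the thinning so that, for each pair, \emph{some} efficient distinguisher survives \emph{and} the result is still $\sub$-minimal; this uses the tree structure of~$N$ (the distinguishers of a fixed pair form a chain, and one can always retain a minimum-order element of that chain without creating redundancy elsewhere), but it needs to be argued, not asserted.

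Your treatment of~(iii) has similar gaps. Uniqueness of the extension $O_P$ does not follow just from ``separations on which two consistent orientations differ have order $\ge |P|$'': you need minimality~(ii) here in an essential way, to rule out a separation of~$T$ that $P$ does not orient yet is not forced by what $P$ does orient. Likewise, your surjectivity sketch (``a node not hit yields a redundant separation'') is the right idea but, as stated, only handles leaves cleanly; the interior case needs the observation that a splitting star not of the form~$O_P$ contains a separation that is the \emph{unique} $T$-distinguisher for no pair in~$\P$, which again leans on~(ii) and on the chain structure of distinguishers in a tree set.
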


Theorem~\ref{TTThm} starts with a seemingly stronger premise than Theorem~\ref{CTTThm} does: we must fix a set $\P$ of pairwise distinguishable profiles that we wish to distinguish. The set of {\em all\/} robust and regular profiles, which Theorem~\ref{CTTThm} speaks about, clearly does not satisfy this premise. However, the tree set~$T$ which Theorem~\ref{TTThm} returns will still distinguish every two such profiles that are distinguishable at all (as in Theorem~\ref{CTTThm}): we just have to apply Theorem~\ref{TTThm} with $\P$ the set of all the {\em maximal\/} robust and regular profiles. Then for any distinguishable  robust and regular profiles $P,Q$ there are profiles $P'\supe P$ and $Q'\supe Q$ in~$\P$, and any separation that distinguishes $P'$ from~$Q'$ efficiently will also distinguish $P$ from~$Q$. And the tree set $T$ returned by Theorem~\ref{TTThm} will contain such a separation.

\subsection{Tangles of separation systems, and duality}\label{subsec:tangles}

A set $\sigma$ of oriented separations in some given separation system $(\vS,\le\,,\!{}^*)$ is a \emph{star of separations} if they point towards each other: if $\vr\le\sv$ for all distinct ${\vr,\vs\in\sigma}$. In particular, stars of separations are nested. They are also consistent: if $\rv,\vs$ lie in the same star we cannot have $\vr < \vs$, since also $\vs\le \vr$ by the star property.

An \emph{${S}$-tree} is a pair $(T,\alpha)$ of a tree $T$\ and a function $\alpha\colon\vec E(T)\to \vS$ such that, for every edge $xy$ of~$T$, if $\alpha(x,y)=\vs$ then $\alpha(y,x)=\sv$. It is an $S$-tree {\em over $\F\sub 2^{\vec S}$} if for every node $t$ of~$T$ we have $\alpha(\vec F_t)\in\F$, where
 $$\vec F_t := \{(x,t) : xt\in E(T)\}.$$

\noindent
 We shall call the set $\vec F_t\sub\vec E(T)$ the {\em oriented star at~$t$} in~$T$. Usually, $\F$~will be a set of stars in~$\vS$.

An $\F$-{\em tangle\/} of~$S$ is a consistent orientation of~$S$ that {\em avoids\/}~$\F$, that is, has no subset $\sigma\in\F$. Since all consistent orientations of~$\vS$ contain all its trivial elements, any $\F$-tangle of~$S$ will also be an $\F'$-tangle if $\F'$ is obtained from~$\F$ by adding all {\em co-trivial singletons\/}, the sets $\{\rv\}$ with $\vr$ trivial in~$S$.%
   \COMMENT{}
   Sets~$\F$ that already contain all co-trivial singletons will be called {\em standard\/}.

The tangle duality theorem for abstract separation systems~$\vS$ says that, under certain conditions, if $\F\sub 2^{\vec S}$ is a standard set of stars there is either an $\F$-tangle of~$S$ or an $S$-tree over~$\F$, but never both. The conditions, one on~$\vS$ and another on~$\F$, are as follows.

A separation $\vso\in\vS$ \emph{emulates} another separation $\vr\le\vso$ if every%
   \footnote{Technically, we also need this only for $\vs\ne\rv$, and only when $\vr\ne\rv$. Since all the separations we consider in this paper are proper, however, both these will hold automatically: if $\vs=\rv$ then $\vr\le\vs = \rv$ is small, and likewise if $\vr=\rv$.}
   $\vs\ge\vr$ satisfies $\vs\lor\vso\in\vS$. The separation system $\vS$ is \emph{separable} if for every two nontrivial $\vr,\rvdash\in\vS$ such that $\vr\le\vrdash$ there exists an $s_0\in S$ with an orientation $\vso$ that emulates~$\vr$ and its inverse $\svo$ emulating~$\rvdash$. This may sound complicated, but we do not need to understand the complexity of this here: the separation system in our intended application will easily be seen to be separable, which will allow us to apply the tangle-tree duality theorem and benefit from what it says, which does not involve the notion of separability.

Similarly, $\F$~has to satisfy a complicated condition, which we shall have to check but do not need to understand. Let $\vr\le\vso\le\vrdash$ be as above, with $\vso$ emulating~$\vr$ and $\svo$ emulating~$\rvdash$. For every $\vs\in\vS$ with $\vr\le\vs$ ($\ne\rv$), let
 $$f\shift(\!\vr)(\vso) (\vs) := \vs\vee\vso\quad\text{and}\quad f\shift(\!\vr)(\vso) (\sv) := (\vs\vee\vso)^*.$$
 We say that $\F$ is {\em closed under shifting\/} if, for all such $\vr,\vrdash$ and~$\vso$, the image under $f\shift(\!\vr)(\vso)$ of every star $\sigma\in\F$ containing a separation $\vs\ge\vr$ again lies in~$\F$. (Note that $f\shift(\!\vr)(\vso) (\vsdash)$ is defined for all $\vsdash\in\sigma$, since $\vr\le\vs\le\svdash$ by the star property.)

The premise of the tangle duality theorem for abstract separation systems contains a condition that combines these two assumptions into one; it requires $\vS$ to be `$\F$-separable'. As before, we do not need to understand what this means, since it is an easy consequence of the two assumptions just discussed:

\begin{LEM}\label{Fsep} {\rm\cite{TangleTreeGraphsMatroids}}
If $\vS$ is separable and $\F$ is closed under shifting, then $\vS$ is $\F$-separable.
\end{LEM}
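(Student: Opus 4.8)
The plan is to unwind the definition of ``$\F$-separable'' (from~\cite{TangleTreeGraphsMatroids}) and observe that it splits exactly along the two hypotheses. Recall that $\vS$ is \emph{$\F$-separable} if for every two nontrivial $\vr,\rvdash\in\vS$ with $\vr\le\vrdash$ there is an $s_0\in S$ such that $\vso$ {\em $\F$-emulates\/}~$\vr$ in~$\vS$ and $\svo$ {\em $\F$-emulates\/}~$\rvdash$ in~$\vS$; and that ``$\vso$ $\F$-emulates~$\vr$'' means ``$\vso$ emulates~$\vr$'' {\em plus\/} the requirement that $f\shift(\!\vr)(\vso)$ map every star $\sigma\in\F$ having an element $\vs\ge\vr$ back into~$\F$ (this shift being defined on all of~$\sigma$ automatically, by the star property, exactly as noted just after the definition of ``closed under shifting''). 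So $\F$-separability asks for nothing beyond an {\em emulating\/} $s_0$, which separability supplies, whose associated shifts preserve~$\F$, which closedness under shifting supplies. The proof is thus a short definition chase, and the $s_0$ produced by separability will serve unchanged.

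First I would fix nontrivial $\vr,\rvdash\in\vS$ with $\vr\le\vrdash$ and apply separability to obtain $s_0\in S$ with $\vso$ emulating~$\vr$ and $\svo$ emulating~$\rvdash$; this already forces $\vr\le\vso\le\vrdash$. It remains to upgrade these two emulations to $\F$-emulations. For~$\vso$: the data $\vr\le\vso\le\vrdash$ with $\vso$ emulating~$\vr$ and $\svo$ emulating~$\rvdash$ is precisely a configuration of the kind over which $\F$ is assumed closed under shifting, so that hypothesis directly yields $f\shift(\!\vr)(\vso)(\sigma)\in\F$ for every star $\sigma\in\F$ with an element~$\ge\vr$ --- exactly the extra clause $\F$-emulation demands. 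For~$\svo$: I would apply closedness under shifting a second time, to the ``reversed'' configuration in which $\rvdash$ plays the role of~$\vr$, the separation~$\rv$ plays the role of~$\vrdash$, and $s_0$ carries the orientation~$\svo$. This configuration is again admissible: $r$ and~$r'$ are nontrivial, hence so are all four of $\vr,\rv,\vrdash,\rvdash$; inverting $\vr\le\vrdash$ gives $\rvdash\le\rv$; and the two requirements on the chosen separation --- that $\svo$ emulate~$\rvdash$, and that its inverse~$\vso$ emulate~$\vr$ --- were handed to us simultaneously by separability. Hence $f\shift(\!\rvdash)(\svo)(\sigma)\in\F$ for every star $\sigma\in\F$ with an element~$\ge\rvdash$, so $\svo$ $\F$-emulates~$\rvdash$ as well. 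Thus $s_0$ is the separation required by the definition of $\F$-separability for the pair~$\vr,\rvdash$; as this pair was arbitrary, $\vS$ is $\F$-separable.

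I do not expect a genuine obstacle here --- the statement is a repackaging of definitions, which is why it can be quoted outright. The one place that rewards attention is the quantifier bookkeeping in the second half: one must check that the reversed triple really does meet {\em all\/} the hypotheses under which $\F$ is postulated closed under shifting, and in particular that {\em both\/} ``$\svo$ emulates~$\rvdash$'' and ``$\vso$ emulates~$\vr$'' hold --- which is exactly why separability is phrased as a two-sided condition, producing a single $s_0$ that is good at both ends at once, rather than as two separate one-sided statements. Two minor remarks: should the definition of $\F$-separability in~\cite{TangleTreeGraphsMatroids} restrict the pairs $\vr\le\vrdash$ it quantifies over by some further side condition (for instance, excluding pairs already ``blocked'' by a singleton of~$\F$), that restriction is simply inherited and never enters the argument; and since every separation we consider is proper, the footnote caveats attached to ``emulates'' (that $\vs\ne\rv$, and that $\vr\ne\rv$) are automatically met and need no care.
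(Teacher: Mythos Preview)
Your proposal is correct and is exactly the natural definition chase. Note, however, that the paper does not prove Lemma~\ref{Fsep} at all: it merely quotes it from~\cite{TangleTreeGraphsMatroids} and moves on, so there is no in-paper proof to compare against. Your argument is the standard one that the cited reference gives: separability supplies the single $s_0$ that emulates on both sides, and closedness under shifting --- applied once to the configuration $(\vr,\vso,\vrdash)$ and once to its reverse $(\rvdash,\svo,\rv)$ --- upgrades both emulations to $\F$-emulations. Your check that the reversed triple again satisfies all the hypotheses (nontriviality on both ends, and the two-sided emulation condition) is exactly the point, and it is sound.
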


Here, then, is the tangle duality theorem for separation systems~\cite[Thm~4.4]{TangleTreeAbstract}:

\begin{THM} {\rm (Tangle duality theorem for abstract separation systems)}\label{TDThm}\\
  Let $(\vU,\le\,,\!{}^*,\vee,\wedge)$ be a universe of  separations containing a separation system $(\vS,\le\,,\!{}^*)$. Let $\F\sub 2^\vS$ be a standard set of stars. If $\vS$ is $\F$-separable,%
   \COMMENT{}
   exactly one of the following assertions holds:
\begin{enumerate}[\rm(i)]\itemsep0pt
  \item There exists an ${S}$-tree over $\F$.
  \item There exists an $\F$-tangle of~$S$.
  \end{enumerate}
\end{THM}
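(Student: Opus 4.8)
The plan is to establish the two implications behind the ``exactly one'' separately: first that (i)~and~(ii) cannot hold together (easy, and not needing $\F$-separability), then that at least one of them holds (the substance, where $\F$-separability is essential).

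\textbf{Not both.} Suppose $(T,\alpha)$ is an $S$-tree over~$\F$ and $O$ is \emph{any} orientation of~$S$. Orient each edge $xy$ of the finite tree~$T$ \emph{towards}~$y$ when $\alpha(x,y)\in O$; this is well defined, since $\{\alpha(x,y),\alpha(y,x)\}$ is an unoriented separation in~$S$, exactly one of whose orientations lies in~$O$. Any orientation of the edges of a finite tree has a sink~$t$, and then $\alpha(x,t)\in O$ for every neighbour~$x$ of~$t$, so $\alpha(\vec F_t)\sub O$; as $\alpha(\vec F_t)\in\F$, the orientation $O$ is not an $\F$-tangle. Since $O$ was arbitrary, no $\F$-tangle exists, so (i) and~(ii) are incompatible. (Trees are finite throughout, and all separations that matter are proper, so degeneracies cause no trouble.)

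\textbf{At least one: the reduction.} I would argue by contradiction: assume neither (i)~nor~(ii). Call $\vr\in\vS$ \emph{refutable} if there is an $S$-tree over~$\F$ ``rooted in~$\vr$'': a tree~$T$ with $\alpha\colon\vec E(T)\to\vS$ that meets the star condition $\alpha(\vec F_t)\in\F$ at every node~$t$ except one leaf~$\ell$, whose incident edge~$\ell x$ has $\alpha(\ell,x)=\vr$. A short gluing argument shows that if $\vr$ and~$\rv$ are \emph{both} refutable then their rooted trees splice, at the exempt leaves, into a genuine $S$-tree over~$\F$ --- the two exempt edges carrying $\vr$ and~$\rv$ fuse into one edge carrying~$r$, and the star at each former neighbour of an exempt leaf is unchanged. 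So, since there is no $S$-tree over~$\F$, at most one orientation of each separation is refutable; it therefore suffices to prove the converse: every nontrivial $\vr\in\vS$ is refutable. Indeed, then any nontrivial $r\in S$ has both $\vr$ and~$\rv$ refutable --- a contradiction. (Trivial separations, and separations having a small orientation, are disposed of directly using that $\F$ is standard.)

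\textbf{The crux: refuting every nontrivial separation.} This is where $\F$-separability of~$\vS$ does its work; it plays the role that submodularity plays in Robertson and Seymour's branch-width duality. I would argue by induction, refuting separations downward along the order~$\le$ on~$\vS$ (well-founded when~$\vS$ is finite; see below otherwise). To refute~$\vr$, unpack $\F$-separability --- by Lemma~\ref{Fsep} it follows from $\vS$ being separable together with $\F$ being closed under shifting --- to get, for a suitable pair $\vr\lneq\vrdash$ provided by the induction, a separation $s_0\in S$ whose orientation~$\vso$ \emph{emulates}~$\vr$ and whose inverse~$\svo$ emulates~$\vrdash$. Emulation ensures that the joins that arise when one applies the shift maps $f\shift(\vr)(\vso)$ stay in~$\vS$, and closure under shifting ensures that these maps send stars of~$\F$ to stars of~$\F$. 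One then pushes the rooted trees supplied by the induction hypothesis (for the separations involved, all strictly above~$\vr$, and for~$\vso$,~$\svo$) through the relevant shift maps and assembles the shifted stars, together with one new node carrying the shift of~$\vr$ at its exempt edge, into a rooted $S$-tree for~$\vr$.

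\textbf{Main obstacle, and infinite~$\vS$.} The delicate part is precisely this assembly: since shifting alters the separations inside a star, one must check afterwards that the stars still lie in~$\F$, still agree along the shared edges of the tree, and still present the prescribed separation at the exempt leaf; and several boundary cases (small or trivial separations, $s_0$ degenerate, stars of size $\le 1$, $\es\in\F$) need separate treatment, again using that $\F$ is standard. This bookkeeping is the real content of the argument, carried out in~\cite{TangleTreeAbstract, TangleTreeGraphsMatroids}. Finally, because $S$-trees are finite, both ``no $S$-tree over~$\F$'' and ``there is an $\F$-tangle of~$S$'' are determined by the finite subsystems of~$\vS$; for infinite~$\vS$ one therefore either imposes a chain condition making the induction above literally well-founded, or proves the finite case first and passes to the limit by a compactness argument.
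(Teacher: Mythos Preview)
The paper does not prove Theorem~\ref{TDThm}: it is quoted from~\cite{TangleTreeAbstract} (Theorem~4.4 there) without proof and used as a black box for the application in Section~\ref{sec:apps}. There is therefore no in-paper proof to compare your attempt against.

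For what it is worth, your outline deploys the right machinery --- the sink argument for ``not both'', rooted $S$-trees, gluing at the exempt leaves, and the shift maps~$f\shift(\!\vr)(\vso)$ --- and these are indeed the ingredients of the proof in~\cite{TangleTreeAbstract}. But the inductive scheme you describe is not the one used there, and as stated it has gaps: you propose a downward induction along~$\le$ to show that every nontrivial~$\vr$ is refutable, yet you give no base case (why should $\le$-maximal nontrivial separations be refutable?) and you never say where the hypothesis ``no $\F$-tangle'' enters the induction step. In the cited proof the key shifting lemma transfers the existence of a rooted $S$-tree from~$\vr$ to any~$\vso$ that emulates~$\vr$ --- so to a specific separation above~$\vr$, not downward along~$\le$ --- and the global argument is organised around showing that, in the absence of an $S$-tree over~$\F$, the separations that are \emph{not} refutable constitute an $\F$-tangle. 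Your final paragraph effectively concedes this by deferring the ``real content'' to~\cite{TangleTreeAbstract,TangleTreeGraphsMatroids}; that is honest, but it means the proposal is an outline rather than a proof.
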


\section{Images as collections of tangles}\label{sec:apps}

We finally come to apply our theory to image recognition and compression. The basic idea is as follows. In our attempt to identify, or perhaps just to store, the main features of a given image, we consider lines cutting the canvas in two. Lines that cut across a highly coherent region, a region whose pixels are all similar, will be a bad line to cut along, so we assign such lines a high `order' when we view it as a separation of the picture.

The coherent regions of our image should thus correspond to tangles, or profiles,%
   \footnote{In informal discourse, we often say `tangle' instead of `profile', although formally tangles are just a particular type of profile. The sloppiness is justified, however, by the fact that every $k$-profile in a universe of bipartitions of a set, as considered here, is an $\F$-tangle of~$S_k$ for the set $\F$ of stars violating the profile condition~(P). See the proof of Theorem~\ref{TDapplied}.}
   of some order, the higher the greater their perceived coherence. Or more precisely: we {\em define\/} `regions' as tangles in the universe of separations given by those cuts.%
   \footnote{Actually, regions will be equivalence classes of tangles; see later.}
   The novelty of this approach is that, as with tangles in a graph, we make no attempt to assign individual pixels to one region or another.

However, we shall be able to outline those regions. If we wish to outline some particular set $R$ of regions represented by a set $\P$ of pairwise distinguishable profiles,%
   \COMMENT{}
   we apply the non-canonical tangle-tree theorem to ~$\P$. The theorem will return a nested set~$T$ of `separations', which corresponds to a set $L$ of cutting lines. As $T$ is nested, these lines will not cross each other. There are enough of them that for every pair of regions in~$R$ we shall find one that runs between them, separating them in the picture. Since $T$ will distinguish~$\P$ efficiently, we shall even find such a line in~$L$ whose order is as small as that of any line separating these two regions in the entire picture: it will cut through as few `like' pixels as possible for any line separating these two regions. Finally, since the profiles in~$\P$ correspond bijectively to the consistent orientations,%
   \COMMENT{}
   and hence to the splitting stars, of~$T$, the regions in $R$ will correspond to what these splitting stars translate to: the set of all lines in $L$ that are closest to that region, thus outlining it.%
   \COMMENT{}

Finally, we have the tangle duality theorem. It tells us that if a separation system, such as the set $S_k$ of all cutting lines through our picture that we gave an order~$<k$, has no $\F$-tangle for some set $\F$ of stars in~$\vec S_k$, it has an $S_k$-tree over~$\F$. This means that our picture either has a region such that lines of order~$<k$ cannot cut across it, or else we can cut up the entire picture into single pixels by non-crossing lines all of order~$<k$.

Note that this lends some definiteness to our intended definion of `regions' as tangles: although it is fuzzy in that single pixels cannot be assigned to regions, it is also hard in the sense that the picture will, or will not, contain a `region of coherence~$k$': there is nothing vague about the existence or non-existence of an $\F$-tangle of~$S_k$. Up to a certain `resolution' (value of~$k$)%
   \COMMENT{}
   we shall see more and more tangles, each refining earlier ones; but at some point there will only be the `focused' tangles, those pointing to some fixed pixel. This, then, means that the quality of the image does not support any higher `resolution' than~$k$. And the tangle duality theorem will tell us at which resolution this happens. As a spin-off, we obtain a mathematically rigorous definition of the adequacy of a given resolution for a given pixellated image.

So here is our formal setup. Consider a {\em flat\/} 2-dimensional cell complex $X$ {\em of squares\/}: a CW-complex whose 2-cells are each bounded by a set of exactly four 1-cells, and in which every 1-cell lies on the boundary of at most two 2-cells. Think of $X$ as a quadrangulation of a closed surface with or without boundary.

We shall call the 2-cells of~$X$ {\em pixels\/} and its 1-cells {\em edges\/}. We write $\Pi$ for the set of pixels of~$X$, and $E$ for the set of its edges. An edge that lies on the boundary of two pixels is said to {\em join\/} these pixels.

A {\em picture\/} on~$X$ is a map $\pi\colon\Pi\to 2^n$, where we think of the elements of $2^n = \{0,1\}^n$ as giving each of $n$ parameters a value 0 or~1. Think of these parameters as brightness, colour and so on. We use addition on~$2^n$ modulo~2 and denote your favourite norm on~$2^n$ by~$\|~\|$. To an edge $e$ joining two pixels $p$ and~$q$ we assign the number
 $$\delta(e) := \|\pi(q) - \pi(p)\|;$$
 note that this is well defined, regardless of which of the two pixels is $p$ and which is~$q$. We call $\delta(e)$ the {\em weight\/} of the edge~$e$, and $\delta$ a {\em weighting\/} of the picture~$\pi$.

The {\em boundary\/} of a set $A\sub\Pi$ of pixels is the set
 $$\partial A := \{\,e\in E\mid e\text{ joins a pixel }p\in A\text{ to a pixel }q\notin A\}.$$%
   \COMMENT{}

We are now ready to set up our universe~$\vU$ of separations of a given picture~$\pi$. We let $\vU := 2^\Pi$.%
   \COMMENT{}
    Thus, every set $A$ of pixels is an oriented `separation'. The partial ordering~$\le$ on~$\vU$ will be~$\supe$, the involution~* on~$\vU$ will be complementation in~$\Pi$. The join~$\lor$ in~$\vU$ is~$\cap$, and the meet~$\land$ is~$\cup$. If desired, we can think of a separation~$A$ as the oriented bipartition $(A^*,A)$ of~$\Pi$.

Our universe $\vU$ contains no nontrivial small separations, since $A^*\not\sub A$ for all $A\subsetneq\Pi$. Every tree set in~$\vU$ and every profile in $\vU$%
   \COMMENT{}
   will therefore be regular. Also, all profiles and sets of pairwise distinguishable profiles in~$\vU$ will be robust in the sense of~\cite{ProfilesNew}; see the definitions there.%
   \COMMENT{}

We define  the {\em order\/} of a separation $A\in\vU$ as
 $$|A| := \sum\big\{\, N - \delta(e)\mid e\in\partial A\,\big\},$$
 where $N$ is some fixed integer just large enough to make all $|A|$ positive. Formally, we fix not just $X$ but the pair $(X,N)$ at the start, calling it a {\em canvas\/}.

Notice that $|A| = |A^*|$, as required for an order function. Moreover,

\begin{LEM}
The order function $A\mapsto |A|$ is submodular.
\end{LEM}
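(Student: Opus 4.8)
The plan is to reduce submodularity of $A\mapsto|A|$ to a purely combinatorial statement about the boundary edge-sets $\partial A$, and then prove that statement by classifying edges according to which of the four sets $A\cap C$, $A\sm C$, $C\sm A$, $\Pi\sm(A\cup C)$ their two incident pixels lie in. Recall that in our universe $\vU = 2^\Pi$ the join $\vr\vee\vs$ is intersection and the meet $\vr\wedge\vs$ is union, so for $A,C\in\vU$ the inequality we must verify is
$$|A\cap C| + |A\cup C| \le |A| + |C|.$$
Writing $|A| = \sum_{e\in\partial A}\bigl(N-\delta(e)\bigr)$, the first step is to observe that it suffices to show
$$\partial(A\cap C) \;\dot\cup\; \partial(A\cup C) \quad\text{``}\le\text{''}\quad \partial A \;\dot\cup\; \partial C$$
in the sense of multisets of edges: if every edge $e$ occurs at least as often (counted with multiplicity $0$, $1$ or $2$) on the left as on the right, then since each summand $N-\delta(e)$ is nonnegative by the choice of $N$, summing gives the desired inequality. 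So the whole proof comes down to this multiset inclusion.

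The key step is the edge-by-edge check. Fix an edge $e$ joining pixels $p$ and $q$. Partition $\Pi$ into the four cells $A\cap C$, $A\sm C$, $C\sm A$, $\overline{A\cup C}$, and consider into which cells $p$ and $q$ fall. If $p$ and $q$ lie in the same cell, then $e$ contributes $0$ to all four boundaries, so both sides see it $0$ times. If $p$ and $q$ lie in cells that differ ``in the $A$-coordinate but not the $C$-coordinate'' (e.g.\ $p\in A\cap C$, $q\in C\sm A$), then $e\in\partial A$ and $e\in\partial(A\cap C)$ and (since $A\cup C$ separates $p$ from $q$ here only if\dots)\,---\,one checks directly that $e$ lies in exactly one set on each side. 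The genuinely informative case is when $p$ and $q$ are ``diagonal'', i.e.\ $p\in A\cap C$ and $q\in\overline{A\cup C}$, or $p\in A\sm C$ and $q\in C\sm A$: in the first of these $e$ lies in $\partial A$, $\partial C$, $\partial(A\cap C)$ and $\partial(A\cup C)$\,---\,all four\,---\,so it is counted twice on each side; in the second, $e$ lies in $\partial A$ and $\partial C$ but in neither $\partial(A\cap C)$ nor $\partial(A\cup C)$, so it is counted twice on the right and zero times on the left, which is where the slack in the inequality comes from. Tabulating all cases (up to the symmetries $A\leftrightarrow C$ and swapping $p\leftrightarrow q$, there are only a handful) shows the left-hand multiplicity never exceeds the right-hand one.

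I expect the main obstacle to be purely bookkeeping: making sure the case analysis is genuinely exhaustive and that one has not mislabelled which of $\partial(A\cap C)$, $\partial(A\cup C)$ an edge belongs to in the mixed cases\,---\,it is easy to conflate ``$e$ joins the cell $A\cap C$ to another cell'' with ``$e\in\partial(A\cap C)$'', but these do in fact coincide once one remembers that $\partial(A\cap C)$ is exactly the set of edges with one endpoint in $A\cap C$ and one outside it, and similarly $\partial(A\cup C)$ is the set of edges with one endpoint outside $A\cup C$. Once the table is laid out correctly the inequality is immediate, and nothing about the surface structure of $X$ or the weighting $\delta$ beyond nonnegativity of $N-\delta(e)$ is needed. (Indeed the argument shows more: $A\mapsto|\partial A|$ is submodular for any choice of nonnegative edge-values, which is the standard fact that cut functions of graphs are submodular, here applied to the ``pixel adjacency graph'' of the canvas.)
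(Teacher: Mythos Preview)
Your proposal is correct and takes essentially the same approach as the paper: both reduce the submodularity inequality to a multiset inclusion $\partial(A\cap C)\,\dot\cup\,\partial(A\cup C)\subseteq\partial A\,\dot\cup\,\partial C$ on boundary edges, then verify this edge by edge. The paper's argument is slightly more condensed (it checks directly that any edge counted on the left is counted on the right, and that any edge counted twice on the left is counted twice on the right, using $\partial(A\cup C)=\partial(A^*\cap C^*)$ to handle the union case by symmetry), whereas you lay out the full four-cell case analysis and explicitly locate the slack in the $(A\sm C,\,C\sm A)$ case; but these are the same proof.
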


\begin{proof}
We have to show that $|A\cap B| + |A\cup B|\le |A| + |B|$ for all $A,B\sub\Pi$. We shall prove that every $e$ such that $\delta(e)$ is counted on the left will also have its $\delta(e)$ counted on the right, and if it is counted in both sums on the left it is also counted in both sums on the right.

If $e\in \partial(A\cap B)$, then $e$ joins a pixel $p\in A\cap B$ to a pixel~$q$ that fails to lie in~$A$ or which fails to lie in~$B$. In the first case, $e\in\partial A$, in the latter case we have $e\in\partial B$. Since $\partial (A\cup B) = \partial (A^*\cap B^*)$, the same holds for edges in $\partial (A\cup B)$: every such edge lies in $\partial(A^*) = \partial A$ or in $\partial(B^*) = \partial B$.

Finally, if $e$ is counted twice on the left, i.e., if $e\in\partial (A\cap B)$ as well as $e\in\partial (A\cup B) = \partial (A^*\cap B^*)$, then $e$ joins some pixesl $p\in A\cap B$ to some other pixel, and it also joins some $q\in A^*\cap B^*$ to some other pixel. As $A\cap B$ and $A^*\cap B^*$ are disjoint, we have $p\ne q$, so $p$ and $q$ are precisely the two pixels on whose boundary $e$ lies. But this means that $e\in\partial A$%
   \COMMENT{}
   as well as $e\in\partial B$,%
   \COMMENT{}
   so $e$ is counted twice also on the right.
\end{proof}

It remains to translate the terminology of tangles into some more suitable language for our application. We shall do this separately for our two theorems: for the tangle-tree theorem we use profile as `tangles', while for the tangle duality theorem we shall use $\F$-tangles for some natural~$\F$. It is possible in more than one way to unify these into a single notion of `tangle' to which both theorems apply~\cite{AbstractSepSys, AbstractTangles, ProfileDuality}, but to keep things simple let us here work with the slightly different notions used in our two theorems.

As before, we write $\vec S_k := \{\vs\in\vU: |s|<k\}$, and the {\em $k$-profiles\/} in~$\vU$ are the profiles of~$S_k$. 

A $k$-profile~$P$ is {\em focused\/} if there exists a pixel~$p$ such that $\{p\}\in P$. It is {\em principal\/} if there exists a pixel $p$ such that $P = \{\,A\subset\Pi\mid p\in A\}\cap\vec S_k$. Focused profiles are always principal, but not conversely.%
   \COMMENT{}
   Note that every pixel~$p$ defines a focused $k$-profile for {\em every\/} integer $k>|\{p\}|$.

We consider two profiles $P,Q$ in~$\vU$ as {\em equivalent\/} if, possibly after swapping their names, $P$~is a $k$-profile and $Q$ is an $\ell$-profile for some $\ell\le k$ and every $k'$-profile with $\ell\le k'\le k$%
   \COMMENT{}
   that induces~$Q$ is in turn induced by~$P$. This is indeed an equivalence relation.%
   \COMMENT{}

A {\em region\/} of~$\pi$ is an equivalence class of profiles in~$\vU$ that contains no focused profile. The reason for disregarding classes represented by a focused profile is not only that these will be uninteresting and easy to deal with if necessary; it is also that our notion of the `cohesion' and `visiblity' of a region (see below) would otherwise be undefined.%
   \footnote{Intuitively, a region will be more visible if it contains $k$-profiles for bigger~$k$. Regions containing focused profiles would have infinite visibility, which would be inadequate and technically cumbersome.}

Equivalent profiles are indistinguishable, because one induces the other. The converse is not normally true: if $P$ as above induces~$Q$ but $Q$ is also induced by another $k$-profile~$P'$, say, then neither $P$ nor~$P'$ will be equivalent to~$Q$. We think of their regions as different refinements of the region represented by~$Q$.

Indeed, let us call two regions {\em distinguishable\/} if they contain distinguishable profiles,%
   \COMMENT{}
   and {\em indistinguishable\/} otherwise. Note that if a profile from a region~$\sigma$ induces a profile in a region $\rho\ne\sigma$ then every profile in~$\sigma$ induces every profile in~$\rho$.%
   \COMMENT{}
   A~region $\sigma$ {\em refines\/} a region~$\rho$ if the profiles in~$\sigma$ induce those in~$\rho$.%
   \COMMENT{}
   Two such regions are distinct but indistinguishable.

The {\em complexity\/} and the {\em cohesion\/} of a region~$\rho$, respectively, are the smallest and the largest $k$ such that $\rho$ contains a $k$-profile.%
   \COMMENT{}
   Its {\em visibility\/} is the positive difference between its cohesion and its complexity. It is an easy (and pretty) exercise to show that the maximum in the definition of cohesion exists.%
   \COMMENT{}

For example, consider a region~$\rho$ of complexity~$c$ and cohesion~$d$. Pick $k < c$, and consider the unique $k$-profile $P$ induced by the elements of~$\rho$. This profile represents a region $\sigma$ which $\rho$ refines. Since $k<c$ and therefore $P\notin\rho$, we have $\sigma\ne\rho$. Hence $\sigma$ is also refined by some region~$\rho'$ that does not, in turn, refine~$\rho$. Similarly, there is either no $k$-profile for any $k>d$ that induces the profiles%
   \COMMENT{}
   in~$\rho$, or there are several, at least two of these disagree on~$S_d$,%
   \COMMENT{}
   and any such two represent distinct regions refining~$\rho$.%
   \COMMENT{}

A {\em line\/} is an unoriented nontrivial separation. Thus, formally, a line $\ell$ is an unordered pair $\{A,A^*\}$ with $A$ a set of pixels that is neither empty nor all of~$\Pi$. Since $\partial A = \partial A^*$, the line $\ell$ determines the set $\partial A = \partial A^*$ of edges `between' $A$ and~$A^*$, and it can be helpful to imagine lines as boundaries. However, boundaries can have more than one connected component, and we make no claim as to whether or not the pair $\{A,A^*\}$ can be reconstructed from this common boundary.%
   \footnote{It can, but we do not need this~-- it just complicates matters to reduce lines to boundaries.}%
   \COMMENT{}

The {\em order\/} of a line $\ell = \{A,A^*\}$ is the number $\ell:= |A| = |A^*|$. A~nested set of lines will be called {\em laminar\/}. It is {\em canonical\/} if it is invariant under the automorphisms of the universe~$\vU = (\vU,\le\,,\!{}^*,\vee,\wedge,|\ |)$, which clearly act naturally on its lines.

A line~$\ell$ {\em separates\/} two regions $\rho,\rho'$ if it distinguishes the profiles in~$\rho$ from those in~$\rho'$.%
   \COMMENT{}
   It separates them {\em efficiently\/} if no line in~$U$ of lower order than~$|\ell|$ separates $\rho$ from~$\rho'$. Note that a line that distinguishes two profiles will separate the regions which these profiles represent.%
   \COMMENT{}
   In particular, a set of lines that distinguishes all the distinguishable profiles in~$\vU$ will separate all the distinguishable regions of~$\pi$.%
   \COMMENT{}

The {\em outline\/} of a region~$\rho$ in a set $L$ of lines is the set of maximal elements of~$P\cap\vL$, where $P$ is the unique $k$-profile in~$\rho$ for $k$ the complexity of~$\rho$.%
   \COMMENT{}
   If $L$ is a tree set, the outline of $\rho$ in~$L$ will be a splitting star of~$L$.%
   \COMMENT{}

Here, then, are our tangle-tree theorems translated to the world of pictures. First, the canonical version:%
   \COMMENT{}

\begin{THM}\label{Canonicalapplied}
For every picture $\pi$ on a canvas~$(X,N)$ there is a canonical lami\-nar set $L$ of lines that efficiently separates all distinguishable regions of~$\pi$.\qed%
   \COMMENT{}
\end{THM}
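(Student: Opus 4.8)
The plan is to derive Theorem~\ref{Canonicalapplied} as a direct translation of the canonical tangle-tree theorem for profiles (Theorem~\ref{CTTThm}) applied to the universe $\vU = 2^\Pi$ set up above. First I would check that the hypotheses of Theorem~\ref{CTTThm} are met: we have already established that $\vU$ is a universe of separations (with $\le\,=\supe$, ${}^*=$ complementation, $\lor=\cap$, $\land=\cup$) and that the order function $A\mapsto|A|$ is submodular (the preceding lemma), so $\vU$ is a submodular universe. Hence Theorem~\ref{CTTThm} yields a canonical regular tree set $T\sub\vU$ that efficiently distinguishes all the distinguishable robust and regular profiles in~$\vU$.

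Next I would translate the conclusion into the language of lines and regions. Recall from the setup that $\vU$ contains no nontrivial small separations, so every tree set and every profile in~$\vU$ is automatically regular; and it was noted (citing~\cite{ProfilesNew}) that all profiles and all sets of pairwise distinguishable profiles in~$\vU$ are robust. Thus ``all distinguishable robust and regular profiles'' is simply ``all distinguishable profiles''. The tree set $T$, being nested, corresponds to a laminar set $L$ of lines (each separation $A\in T$ and its inverse $A^*$ together forming the line $\{A,A^*\}$; degenerate and trivial separations are absent since $T$ is a tree set, so these really are lines in the sense defined). Canonicity transfers: since the map $\vU\mapsto T$ commutes with isomorphisms of universes, and the automorphisms of~$\vU$ act on lines in the obvious way, $L$ is invariant under $\mathrm{Aut}(\vU)$, i.e.\ canonical in the sense defined just before the theorem.

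Finally I would argue the ``efficiently separates all distinguishable regions'' clause. By definition a line separates two regions $\rho,\rho'$ if it distinguishes the profiles in them, and it does so efficiently if no line in~$\vU$ of smaller order separates $\rho$ from~$\rho'$. Given two distinguishable regions, pick distinguishable profiles $P\in\rho$, $P'\in\rho'$; by passing to the profiles of the relevant order (or to maximal extensions, as in the discussion after Theorem~\ref{TTThm}) one gets distinguishable profiles whose every distinguisher in $\vU$ is a line separating $\rho$ from~$\rho'$, and conversely any line separating the regions distinguishes some such pair of profiles. Then efficiency of $T$ at the level of profiles — it contains a separation of minimum order among all those in $\vU$ distinguishing the two profiles — translates to efficiency at the level of regions. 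I expect the only real care needed here is bookkeeping: making sure that ``minimum order among separations distinguishing the profiles'' and ``minimum order among lines separating the regions'' coincide, which follows because distinguishing the extremal (say, lowest-complexity) profiles of the two regions is equivalent to separating the regions, a point already made in the paragraph defining when a line separates regions. The main obstacle, such as it is, is thus not a deep argument but the verification that the robustness/regularity hypotheses are vacuous here and that the region-vs-profile efficiency dictionary is faithful; everything else is an immediate appeal to Theorem~\ref{CTTThm}.
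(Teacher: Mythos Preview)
Your proposal is correct and matches the paper's approach exactly: the theorem is stated with a \qed\ and no separate proof, because it is meant as an immediate translation of Theorem~\ref{CTTThm} via the dictionary set up in Section~\ref{sec:apps} (submodularity of the order function, automatic regularity and robustness, and the observation that a line distinguishing two profiles separates the regions they represent). The bookkeeping you flag about efficiency is precisely what the paper absorbs into its definitions just before the theorem, so there is nothing more to do.
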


Next, the refined but non-canonical version:

\begin{THM}\label{Noncanonicalapplied}
For every set $R$ of pairwise distinguishable regions of a picture~$\pi$ on a canvas~$(X,N)$ there is a laminar set $L$ of lines that efficiently separates all the regions in~$R$, and which is miminal in the sense that no proper subset of~$L$ separates all the regions in~$R$ (efficiently or not).

The splitting stars of~$L$ are precisely the outlines in~$L$ of the regions in~$R$.\qed
\end{THM}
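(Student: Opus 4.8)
The plan is to derive Theorem~\ref{Noncanonicalapplied} from the non-canonical tangle-tree theorem for profiles (Theorem~\ref{TTThm}), applied to the submodular universe $\vU = 2^\Pi$ constructed above. The first step is bookkeeping: given the set $R$ of pairwise distinguishable regions, each region $\rho\in R$ contains, for $k$ the complexity of $\rho$, a unique $k$-profile $P_\rho$; let $\P := \{P_\rho : \rho\in R\}$. Since distinguishable regions contain distinguishable profiles, and since equivalent profiles are indistinguishable while two profiles from $R$ coming from different regions are distinguishable, $\P$ is a set of pairwise distinguishable profiles. By the Lemma quoted after the discussion of robustness, every profile of a set of bipartitions is robust, and we already noted that all profiles in $\vU$ and all sets of pairwise distinguishable profiles in $\vU$ are robust and regular; hence $\P$ satisfies the hypotheses of Theorem~\ref{TTThm}. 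We also need that the profiles in $\P$ are \emph{not focused} — true, since a region by definition contains no focused profile — which is what prevents the $k$'s (complexities) from being pushed to infinity and keeps everything inside a single universe.

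Next, apply Theorem~\ref{TTThm} to obtain a regular tree set $T\subseteq\vU$ with properties (i)--(iii). Each separation $A\in T\subseteq\vU$, being an oriented bipartition $(A^*,A)$ of $\Pi$ with $A\ne\es,\Pi$, corresponds to an (unoriented) line $\{A,A^*\}$; let $L$ be the resulting set of lines. Because $T$ is nested, $L$ is laminar. Property (i) says every two profiles in $\P$ are efficiently distinguished by some separation in $T$; translating, every two regions $\rho,\rho'\in R$ are separated by a line in $L$, and efficiently — here one must check that "efficiently distinguishes the $k$-profiles $P_\rho,P_{\rho'}$ in $S_k$" for the appropriate $k$ upgrades to "efficiently separates the regions in the whole of $\vU$". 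This is the one genuinely delicate point: I would argue that if some line in $\vU$ of order $m$ separates $\rho$ from $\rho'$, it distinguishes all profiles of $\rho$ from all profiles of $\rho'$ (using the remark that when one region's profiles induce another's, every profile of the first induces every profile of the second), in particular it distinguishes $P_\rho$ from $P_{\rho'}$ and lies in $\vec S_k$ for $k$ large enough; so the minimum order of a separator inside $S_k$ equals the minimum over all of $\vU$, and efficiency transfers. Property (ii) gives directly that no proper subset of $L$ distinguishes $\P$, hence (again via the region/profile translation) that no proper subset of $L$ separates all regions in $R$, efficiently or not.

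For the final sentence, invoke property (iii): the map $P_\rho\mapsto O_{P_\rho}$ is a bijection from $\P$ onto the set $\O$ of consistent orientations of $T$, where $O_{P_\rho}$ is the unique consistent orientation of $T$ extending $P_\rho\cap\vec T$. By the discussion in Section~\ref{subsec:nested}, the consistent orientations of a tree set correspond to its splitting stars via taking the set of maximal elements, and the splitting stars of $T$ are exactly these sets $\{\max(O_{P_\rho})\}$. But $\max(O_{P_\rho}) = \max(P_\rho\cap\vec T)$ since $O_{P_\rho}$ is the down-closure of $P_\rho\cap\vec T$ in $T$ together with — here one checks the maximal elements are unchanged by the extension, because any separation added in the extension lies below an already-oriented one — and $\max(P_\rho\cap\vec T)$ is precisely the outline of $\rho$ in $L$ (with $P_\rho$ the complexity-$k$ profile of $\rho$, as in the definition of outline). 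Hence the splitting stars of $L$ are exactly the outlines in $L$ of the regions in $R$, as claimed.

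The main obstacle is the efficiency transfer in the second paragraph: Theorem~\ref{TTThm} guarantees efficiency only relative to $S_k$ for the particular order $k$ at which each profile lives, whereas the statement demands efficiency relative to all lines in $\vU$. Resolving this cleanly requires the observation that a lower-order separator in $\vU$, were it to exist, would already have order $<k$ and hence already lie in $S_k$, contradicting the $S_k$-efficiency — so the argument is short, but it is the place where the translation between the abstract and the pictorial statements genuinely has to be justified rather than just transcribed.
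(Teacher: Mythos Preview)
Your proposal is correct and follows exactly the route the paper intends: the paper states Theorem~\ref{Noncanonicalapplied} with a bare \qed, treating it as the direct translation of Theorem~\ref{TTThm} into the language of pictures, regions and lines set up in Section~\ref{sec:apps}. You have reconstructed that translation in more detail than the paper itself provides, and the two points you flag as needing care (the transfer of efficiency from $S_k$ to all of~$\vU$, and the identification of $\max(O_{P_\rho})$ with the outline $\max(P_\rho\cap\vT)$) are precisely the places where the dictionary is not purely formal; your handling of both is adequate, and the paper simply absorbs them into the \qed.
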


For the tangle duality theorem we need one more definition. A {3\em-star\/} in~$\vU$ is a star with exactly 3 elements. A star $\sigma\sub\vU$ is {\em void\/} if $\bigcap\sigma = \es$. A~star $\sigma = \{\{p\}\}$ for some $pÊ\in\Pi$ is a {\em single pixel\/}.%
   \COMMENT{}

\begin{THM}\label{TDapplied}
For every picture $\pi$ on a canvas~$(X,N)$ and every integer~$k>0$, either $\pi$ has a region of cohesion at least~$k$,%
   \COMMENT{}
   or there exists a laminar set~$L$ of lines of order~$<k$ all whose splitting stars are void 3-stars%
   \COMMENT{}
    or single pixels. For no picture do both these happen at once.
\end{THM}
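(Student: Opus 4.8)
The plan is to derive Theorem~\ref{TDapplied} from the abstract tangle duality theorem, Theorem~\ref{TDThm}, applied to the separation system $\vS_k\sub\vU$. So I must first choose the right set $\F$ of stars, then verify the two hypotheses of Theorem~\ref{TDThm} (that $\F$ is standard and that $\vS_k$ is $\F$-separable), and finally translate the dichotomy ``$\F$-tangle of $S_k$ vs.\ $S_k$-tree over $\F$'' into the dichotomy ``region of cohesion $\ge k$ vs.\ laminar set of lines of order $<k$ with only void $3$-stars and single pixels as splitting stars''. The natural candidate for $\F$ is the set of all stars $\sigma\sub\vS_k$ that are either void $3$-stars, or single pixels $\{\{p\}\}$, or violate the profile condition~(P) (i.e.\ contain $\vr,\vs$ with $\vr\lor\vs=\bigcap$ of them $\notin P$ forced out)---more precisely, $\F$ should consist of all stars in $\vS_k$ forcing $\rv\land\sv$ into the orientation, together with all single pixels and all void $3$-stars, and then be made \emph{standard} by throwing in all co-trivial singletons. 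Since $\vU$ has no nontrivial small separations, co-trivial singletons correspond to sets $A$ with $A^*\subsetneq A$, i.e.\ $A=\Pi$; these cause no trouble.

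The first substantive step is to check that an $\F$-tangle of $S_k$, for this $\F$, is exactly a $k$-profile that is not focused and not ``trivially small''. A consistent orientation of $S_k$ avoiding the $(P)$-violating stars is precisely a $k$-profile, by the standard argument sketched in the footnote of Section~\ref{sec:apps} (the one pointing to this proof): the profile axiom~(P) says exactly that $P$ contains no star $\{\rv,\sv\}$ (or larger star) whose ``infimum'' $\rv\land\sv$ is excluded, and a star in $\vS_k$ of the relevant shape is a witness to a violation of~(P). Avoiding single pixels means $\{p\}\notin P$ for every pixel $p$, i.e.\ $P$ is not focused. Then I invoke the easy exercise mentioned in the text (existence of the maximum in the definition of cohesion): a non-focused $k$-profile generates a region whose cohesion is $\ge k$; conversely a region of cohesion $\ge k$ contains a $k'$-profile for some $k'\ge k$, which induces a $k$-profile, which is non-focused (a region by definition contains no focused profile, and inducing preserves non-focusedness here because a focused witness $\{p\}$ of small order would already lie in the induced profile). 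Handling the void $3$-stars on the tangle side requires a small observation: a void $3$-star $\sigma=\{A,B,C\}$ with $A\cap B\cap C=\es$ inside a consistent orientation would force, via~(P) applied twice, the empty set $\es=A\cap B\cap C$ (or rather its complement being excluded) into the orientation, which is impossible for a genuine profile; so profiles automatically avoid void $3$-stars, and adding them to $\F$ does not shrink the class of $\F$-tangles---it only enlarges the class of $S_k$-trees we are allowed to build, which is what we want for the structural side.

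The second step is verifying the hypothesis ``$\vS_k$ is $\F$-separable''. By Lemma~\ref{Fsep} it suffices to show (a) $\vS_k$ is separable and (b) $\F$ is closed under shifting. For (a): given nontrivial $\vr\le\vrdash$ in $\vS_k$, writing $\vr=A$ and $\vrdash=A'$ with $A'\sub A$ (recall $\le$ is $\supe$), I need $s_0\in S_k$ with an orientation $\vso$ emulating $\vr$ and $\svo$ emulating $\vrdash$. The honest candidate is a separation realising the minimum order of any line separating the ``$A$-side'' from the ``$A'$-side''; submodularity of $|\ |$ (the preceding Lemma) gives, by the usual uncrossing argument, that such a minimum-order line $\vso$ with $\vrdash \le \vso \le \vr$ satisfies $\vs\lor\vso\in\vS_k$ for all $\vs\ge\vr$ and dually for $\svo$---this is precisely the submodularity-based proof of separability that appears in \cite{TangleTreeAbstract, TangleTreeGraphsMatroids} for submodular universes, and $\vU$ here is submodular. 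For (b): the shifting map $f\shift(\!\vr)(\vso)$ sends $\vs\mapsto\vs\lor\vso$ and $\sv\mapsto(\vs\lor\vso)^*$; I must check it maps each of the three kinds of star in $\F$ to a star in $\F$. A $(P)$-violating star stays $(P)$-violating because shifting is a lattice-homomorphic-enough operation on the relevant suprema (this is exactly the lemma, in \cite{ProfilesNew}, that the family of profile-violating stars is closed under shifting); a single pixel $\{\{p\}\}$: here $\vr\le\{p\}$ means $\{p\}\sub$ the pixel-set $\vr$, so $\vr=\{p\}$ (as $\{p\}$ is a singleton), hence $\vso$ with $\{p\}=\vr\le\vso$ forces $\vso\sub\{p\}$, so $\vso=\{p\}$ too, and the shift fixes the star; a void $3$-star: shifting can only shrink the intersection $\bigcap\sigma$ (joins in $\vU$ are intersections of pixel-sets, so $\vs\lor\vso$ has smaller pixel-set), so a void $3$-star shifts to a void $3$-star---though I must be a little careful that it remains a genuine $3$-star (no two elements collapse or become comparable), which I expect to follow from the star structure but is the kind of edge case I would need to nail down. \emph{This verification---(b), and especially the interaction of the void-$3$-star clause with shifting---is where I expect the main work to lie}, because the profile-violating part is cited wholesale but the pixel and void clauses are new to this setup.

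Finally, the translation of the conclusion. If there is an $\F$-tangle of $S_k$, by the first step there is a non-focused $k$-profile, hence a region of cohesion $\ge k$. If instead there is an $S_k$-tree $(T,\alpha)$ over $\F$, I read off $L:=\{\alpha(\ve):\ve\in\vec E(T)\}$, a nested set (the image of the edge set of an $S$-tree in $\vU$ is always nested, as noted in Section~\ref{subsec:nested} and the surrounding theory) of lines of order $<k$, i.e.\ a laminar set. The splitting stars of $L$ are, up to the usual identification, the images $\alpha(\vec F_t)$ of the oriented stars at nodes $t$ of $T$, each of which lies in $\F$; hence each is a void $3$-star, a single pixel, or a $(P)$-violating star or co-trivial singleton. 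But a \emph{splitting} star is in particular the set of maximal elements of a consistent orientation of $L$, so it is genuinely a star and it cannot be a $(P)$-violating star in the strong sense (a consistent orientation having such a star as its set of maxima would contain $\rv\land\sv$-excluded configurations)---actually the cleanest way is to arrange $\F$ from the start so that its only members that can arise as $\alpha(\vec F_t)$ for a splitting star are void $3$-stars and single pixels, which one gets by checking that co-trivial singletons ($A=\Pi$) and $(P)$-violating stars never occur as oriented stars at a node of an $S_k$-tree over $\F$ in this particular $\vU$ (the former because $\{\Pi\}$ as a star at a leaf would make that leaf-edge trivial; the latter by a short consistency argument). That gives exactly the stated form of the splitting stars. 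Mutual exclusivity---``for no picture do both happen''---is immediate from the ``never both'' clause of Theorem~\ref{TDThm}. $\qed$
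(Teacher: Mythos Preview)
Your overall architecture---apply Theorem~\ref{TDThm} to $\vS_k$ with a suitable $\F$, then translate both alternatives---matches the paper's. The divergence, and the source of the gaps, is the choice of~$\F$.

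The paper takes $\F$ to be the set of all \emph{void stars with at most three elements} together with all single pixels. This single choice does all the work you are trying to distribute over several clauses. It is automatically standard, since the co-trivial singleton $\{\es\}$ is a void $1$-star. It is closed under shifting: with $\vso=C$ and $\vr\le A_0$, the element $A_0$ shifts to $A_0\cap C$ while each other $A_i$ shifts to $A_i\cup C^*$, so $\bigcap$ of the shifted star is contained in $A_0\cap A_1\cap A_2\cap C=\es$. Note the shifted star may have fewer elements, which is exactly why one needs void stars of \emph{all} sizes $\le 3$, not just $3$-stars; this is the edge case you flag but do not resolve. A single pixel $\{\{p\}\}$ shifts to $\{\{p\}\cap C\}$, which is either $\{\{p\}\}$ or the void $1$-star~$\{\es\}$; your argument that the shift must fix it (``$\vr\le\{p\}$ forces $\vr=\{p\}$, hence $\vso=\{p\}$'') is wrong, since $\vr\le\{p\}$ only says $p\in\vr$. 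Most importantly, avoiding void stars of size $\le 3$ already \emph{implies} condition~(P): given $\vr,\vs\in P$, submodularity puts one of $\vr\wedge\sv,\ \vs\wedge\rv$ into~$\vS_k$---say $\vr\wedge\sv$---whence consistency puts it in~$P$; but $\{\vr\wedge\sv,\ \vs,\ \rv\wedge\sv\}$ is a void star, so $\rv\wedge\sv\notin P$. Hence there is no need to add ``$(P)$-violating stars'' to~$\F$; and in fact a raw $(P)$-violating triple $\{\vr,\vs,(\vr\vee\vs)^*\}$ is not a star in general, so your $\F$ as written is not a family of stars and the citation to a shifting lemma for it is misplaced.

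On the structural side, the paper does not argue that the unwanted star types cannot occur at nodes of the $S_k$-tree. It simply obtains an $S_k$-tree over the full~$\F$ from Theorem~\ref{TDThm} and then \emph{contracts and prunes} it (citing~\cite{TreeSets}) to an $S_k$-tree over only void $3$-stars and single pixels; the image of $\alpha$ is then the laminar set~$L$. Your attempt to exclude co-trivial singletons and $(P)$-violating stars from splitting stars by ``a short consistency argument'' is not the route taken and would require a separate proof.
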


\begin{proof}
Let $\F$ be the set of all void stars with up to 3 elements and all single pixels. Note that $\F$ is standard, since co-trivial singletons are void stars. Let us show that $S_k$ has an $\F$-tangle if and only if $\pi$ has a region of cohesion at least~$k$.

Assume first that $S_k$ has an $\F$-tangle, $P$~say. We start by showing that $P$ is a profile. To verify the profile condition~(P), let $\vr,\vs\in P$ be given; we have to show that $\rv\land\sv\notin P$. By submodularity, one of $\vr\land\sv$ and $\vs\land\rv$ lies in~$\vS_k$. Assume that $\vr\land\sv$ does; the other case is analogous. As $\vr\land\sv \le\vr\in P$, the consistency of $P$ implies that $\vr\land\sv \in P$. But $\{\vr\land\sv,\vs,\rv\land\sv\}$ is a void star, and hence in~$\F$. Since $P$ avoids~$\F$, we therefore have $\rv\land\sv\notin P$ as required.

Next, observe that the profile~$P$ is not focused: for every $p\in\Pi$ we have $\{\{p\}\}\in\F$ and hence $\{p\}\notin P$, by the definition of~$P$ as an $\F$-tangle.

The equivalence class $\rho$ of profiles in~$\vU$ which $P$ represents is therefore a region of~$\pi$. Since $P\in\rho$ is an orientation of~$S_k$, and hence a $k$-profile, $\rho$~has cohesion at least~$k$.

Conversely, assume that $\pi$ has a region~$\rho$ of cohesion at least~$k$. Then $\rho$ contains a $k'$-profile~$P$ for some $k'\ge k$. Since any void star in~$P$ of up to three separations constitutes a violation of~(P),%
   \COMMENT{}
   which $P$ satisfies,%
   \COMMENT{}
   there are no such stars in~$P$. If $P$ contained a single pixel~$\{\{p\}\}$ as a subset, it would contain the separation $\{p\}$ as an element and hence be focused. But since $\rho\owns P$ is a region, $P$~is not focused. Hence $P$ is an $\F$-tangle of~$S_k$.

We have shown that one of the two conditions in the tangle duality theorem, the existence of an $\F$-tangle of~$S_k$, translates to the existence of a region of cohesion at least~$k$ in~$\pi$, as desired. The other condition is that there exists an $S_k$-tree over~$\F$. By contracting and pruning this $S_k$-tree in an obvious way~\cite{TreeSets}, we can turn it into an $S_k$-tree $(T,\alpha)$ over the set $\F'\sub\F$ of just the void 3-stars and single pixels in~$\F$, i.e., which uses neither void 2-stars nor void 1-stars.%
   \COMMENT{}
   The unoriented separations in the image of~$\alpha$ then form the desired%
   \COMMENT{}
   laminar set $L$ of lines of order~$<k$. Conversely, it is shown in~\cite{TreeSets} that from any tree set $S$ of separations without splitting stars in~$\F'$ one can construct an $S$-tree $(T,\alpha)$ over~$\F'$, and thus also over~$\F$, with $S$ the image of~$\alpha$. Hence the second condition in the tangle duality theorem also translates as desired.

In order to be allowed to apply the tangle duality theorem, with $\vS := \vU$, we still need to check that $\vS_k$ is separable and that $\F$ is closed under shifting; cf.\ Lemma~\ref{Fsep}.%
   \COMMENT{} The fact that $\vS_k$ is separable is a direct consequence of submodularity and proved in~\cite{TangleTreeGraphsMatroids}. The fact that $\F$ is closed under shifting is an easy consequence of the fact that $\vU$ consists of bipartitions of a set,~$\Pi$. Indeed, consider a star $\sigma = \{A_0,A_1,A_2\}\in\F$ as in the definition of shifting.%
   \footnote{If $\sigma$ contains only one or two separations, ignore $A_1$ or~$A_2$ as appropriate.}
   If $\vso=C$ and $\vr\le A_0$, say (see there), then $A_1$ and $A_2$ shift to supersets $A_1\cup C^*$ and $A_2\cup C^*$, respectively, but $A_0$ shifts to $A_0\cap C$. Hence if $A_0,A_1,A_2$ have an empty overall intersection then so do their shifts: void stars shift to void stars. Similarly, a single pixel~$\{\{p\}\}$ shifts to~$\{\{p\}\cap C\}$, which is either equal to $\{p\}$ or a co-trivial singleton~$\{\es\}$, and hence again in~$\F$.
\end{proof}

We remark that the choice of $\F$, which led to the theorem above, is but a minimal one that makes the tangle duality theorem applicable. We could choose to add more stars of separations to~$\F$: any stars $\{A_1,\dots,A_n\}$ whose `interiors' $A_1^*\cap\ldots\cap A_n^*$ are deemed to be too small to be home to an interesting tangle.

\section{Potential applications}

As mentioned in the introduction, tangles are intrinsically large objects from a complexity point of view: a tangle of order~$k$, or a $k$-profile, has to orient every separation of order less than~$k$, of which there can be many and which also have to be found. But while tangles are complex things, the laminar sets of lines given by Theorems~\ref{Canonicalapplied} and~\ref{Noncanonicalapplied} are not. So even if generating them turned out to be computationally hard (which is not clear; see below), once they are found they offer a much compressed version of the essence of the picture.

Just how well a few lines separating the main regions of a picture can convey its essence can be gleaned, for example, from caricatures: it seems that such lines, more than anything else, trigger our visual understanding and recognition.

Moreover, there is no reason to believe that these laminar sets of lines cannot be generated efficiently from a given picture, taking advantage of the additional structure that pictures offer over abstract separation systems.

For a start, we could economise by generating the profiles together with the lines that distinguish them,%
   \COMMENT{}
   perhaps in increasing order of those lines.%
   \footnote{There will be a trade-off between the synergy of generating lines of different orders together and the benefits of possibly not even having to look at lines of high order; see Section~\ref{subsec:recognition}.}
   Once we have generated all the lines up to some desired order, their profiles can be obtained relatively easily from the poset they form. Indeed, recall that a profile is a consistent orientation of the set $S_k$ of lines (separations) of order less than~$k$ that satisfies condition~(P). With some book-keeping it should be possible to generate the poset $\vec S_k$ together with the lines in~$S_k$. Its profiles are then easily computed: we iteratively orient a line $s$ not yet oriented, as $\vs$ say, adding at the same time its down-closure in~$\vec S_k$ and all the oriented lines $\vr\lor\vs\in\vec S_k$ it generates with any $\vr$ selected earlier. This profile can then be stored, if desired, by remembering just its maximal elements, since it is precisely their down-closure in~$\vec S_k$.

\subsection{Image recognition}\label{subsec:recognition}

The canonical tangle-tree theorem derives from a given image a tree set of lines each labelled by its order. These tree sets are tantamount to trees with labelled edges~\cite{TreeSets}. Since they are generated canonically, trees coming from images of the same object will have large subtrees that are isomorphic, or nearly so, as edge-labelled trees. Hence, an image recognition algorithm might test for the presence of such subtrees, and return a `different' verdict for images of objects where such subtrees are not found. Those rare pairs of objects where such trees are found could be analysed in more detail and at greater cost.

The proof of the canonical tangle-tree theorem generates its tree set of separations in increasing order: low-order separations are included in the final tree set at an early stage, before higher-order separations are even considered. When we compare two pictures, and these differ already by their low-order regions, we will know this early in the algorithm and can terminate it at that stage.

\subsection{Compression}

The non-canonical tangle-tree theorem returns an even smaller tree set of lines that still separate all the distinguishable regions of an image. Again, these lines can be labelled by their order and thus give rise to an edge-labelled tree, which will be cheap to store.

In addition, we might reduce the number of lines further by limiting the regions in~$R$ we aim to distinguish (see Theorem~\ref{Noncanonicalapplied}) to those of high visibility. These regions will correspond to the most distinctive features of a picture, and it may well be enough to demarcate just these.

To re-create an approximation of the original picture from these few laminar lines, one can draw them on a canvas and fill the areas between them~-- those corresponding to the  splitting stars of the tree set, the nodes of the tree~-- with pixels similar to those in a small sample taken from the corresponding areas in the original picture when these lines were computed. For every such area, its pixels are likely to be similar, so even a small sample should suffice to smooth out the gradual differences that can still occur within such an area.%
   \footnote{The footprint under~$\pi$ of the pixels in a large such area may shift gradually. The fact that it comes from a region only means that, locally, we cannot cut through it by a line of low order: adjacent pixels tend to be similar, but they may change gradually, and a few atypical pixels can occur. Still, a small set of pixels sampled from the area should give a good representation.}

\subsection{Supported resolution: telling objects from noise}

The tangle duality theorem allows us to offer a mathematically rigorous definition of the maximum resolution%
   \COMMENT{}
   that a set $A\sub\Pi$ of pixels supports: the {\em largest~$k$ for which it admits an unfocused $k$-profile, i.e., has a region of cohesion at least~$k$.}%
   \COMMENT{}

For if we are interested in the potential features of an image handed to us as just a data set of pixels, then real features are likely to correspond to regions of cohesion at least some $k$ that we may specify, while areas not containing such a region will be unimportant background, or `noise', at this {\em resolution\/}~$k$.

Note that this notion of resolution does not refer to how small our pixels are: these are assumed to be constant. What it measures is up to what degree of similarity an area of the picture blending with, or inserted within, another area can still be identified as a feature of the picture with some objective degree of certainty. In a nutshell, `higher resolution' in our sense does not mean `more pixels' (which we consider as given) but `more features' read out of those pixels.

We shall discuss this in more detail in Section~\ref{sec:examples}, in the context of the example shown in Figures \ref{f4} and~\ref{f6}.

\subsection{Sending pictures through a noisy channel}

A central problem in information theory is how to send information, such as a picture, through a noisy channel in such a way that its essence can be retrieved at the other end. It would seem that tangles, being inherently fuzzy, can survive such distortions particularly well. Deleting or adding a few vertices or edges to or from a large grid will not change the fact that it induces a large-order tangle.

The most visible regions of a picture, and their relationship as to which refines which, should thus survive even the transmission of the picture by way of sending pixels: although their complexity and cohesion might change a little, highly visible regions will remain highly visible regions.%
   \COMMENT{}

\section{Examples}\label{sec:examples}

In this section we collect some examples that illustrate some of the ideas discussed in this
paper.

\begin{figure}[h]
  \center
    \includegraphics[scale=.3]{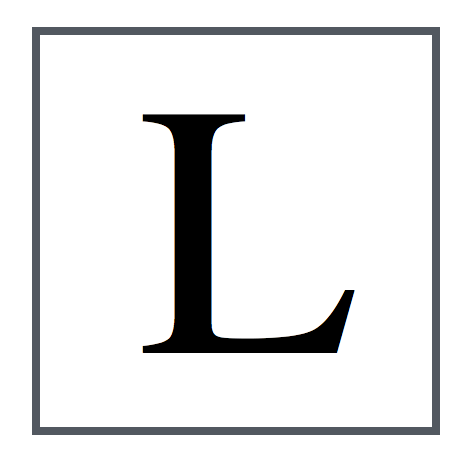}
\caption{A simple shape with one main region}
\label{f1}
\end{figure}

Consider the black and white picture of the letter~L shown in Figure~\ref{f1}. The function~$\pi$ assigns~1 to the black pixels on the canvas, and 0 to the white ones. Let~$\delta$ be the weighting that assigns~1 to an edge~$e$ if $e$ joins a black pixel to a white one, and~0 otherwise. The line $\ell=\{A,A^*\}$, where $A$ is the set of black pixels, separates the L from its background. If  $N=1$, then the line $\ell$ as well as the trivial separation $\Pi$ and its inverse~$\emptyset$ have order~0.

Since no line of order~0 has an edge in its boundary that joins two like pixels (both black or both white), because such an edge~$e$ would contribute~1 to the order of such a line, $\ell$~is the only line of order~0. It separates two regions of complexity~1: the region $\rho$ represented by the 1-profile $\{A\}$ for the letter~L, and the region represented by the 1-profile~$\{A^*\}$ for its background.%
   \COMMENT{}

Assuming that the L is 10 pixels wide where it is thinnest at the bottom, consider the 11-profile
 $$P = \{A, B_1,\dots,B_n, C_1,\dots, C_m\},$$
 where $B_1\supset\ldots\supset B_n$ are the subsets of~$A$ that contain the right serif%
  \COMMENT{}
   and have order~10, i.e., whose boundary contains only 10 (vertically stacked vertical) edges joining like pixels (which are both black), and the $C_i$ are the complements in~$\Pi$ of sets $D_i\sub\Pi$ of order at most~10 (when viewed as a separation) such that $|D_i\cap B_n|\le |D_i|$.%
   \COMMENT{}
  (The shape of L guarantees that this cannot hold for both $D_i$ and its complement.) These $D_i$ are precisely the sets of order at most~10 that contain fewer pixels from~$B_n$ than their complement does.%
   \COMMENT{}

It is easy to check that $P$ is indeed a profile.%
   \COMMENT{}
   It is unfocused, since none of its elements is a singleton~$\{p\}$. Indeed, if $p$ is white then $\{p\}\sub A^*$ does not lie in~$P$ by consistency, because $A$ does. If $p$ is black, then $\{p\}$ is one of those~$D_i$ defined above, so again $\{p\}\notin P$ by consistency, because $\{p\} = D_i = C_i^*$. Hence $P$ represents a region~$\sigma$.

This `serif region'~$\sigma$ thus has complexity at most~11.%
   \COMMENT{}
   We shall see in a moment that it has complexity exactly~11. Its outline in~$\vU$, therefore,%
   \COMMENT{}
   will be the set of maximal elements of~$P$: the separation $B_n$ and all the separations~$C = D^*$ with $D$ maximal among the~$D_i$.%
   \COMMENT{}

Now consider the 11-profile
 $$Q = \{A, B'_1,\dots,B'_n, C'_1,\dots, C'_{m'}\},$$
 where $B'_i:= A\sm B_i$, and the $C'_i$ are the complements in~$\Pi$ of sets $D_i\sub\Pi$ of order at most~10 such that $|D_i\cap B'_1|\le |D_i|$. Since $P$ and $Q$ are distinct profiles of the same order, $Q$~represents a region~$\tau$ distinct from~$\sigma$; we may think of it as corresponding to the vertical shaft of the letter~L. But $P$ and $Q$ induce the same $k$-profiles for all $k<11$: the profiles consisting of~$A$ and those $C_i$ or~$C'_i$ that have order~$<k$. These profiles, then, are inequivalent to both $P$ and~$Q$ (witnessed by $Q$ and~$P$, respectively), so both $\sigma$ and~$\tau$ have complexity exactly~11.

As both $P$ and~$Q$ extend our 1-profile~$\{A\}$, the region $\rho$ which $\{A\}$ represents has cohesion at most~10. In fact, it has cohesion exactly~10: for $k=2,\dots,10$ the 1-profile $\{A\}$ extends to exactly the $k$-profiles described above (add all separations of the form~$C_i$ or~$C'_i$ that have order~$<k$), and since these induce each other they are all equivalent to~$\{A\}$. The visibility of the region~$\rho$, thus, is $10-1=9$.\looseness=-1

Now let us determine the cohesion and visibility of the serif region~$\sigma$. Assume that the diagonal line cutting through the serif where it is thickest consists of about 20 edges joining like pairs of pixels. Consider any $k>20$, let $P'$ be a $k$-profile inducing~$P$, and let $D\in P'$ be maximal in the ordering~$\le$ on~$P'$. Then $D$ is $\sub$-minimal among the elements of~$P'$. We claim that $D$ contains only one pixel, showing that $P'$ is focused and hence not an element of the region~$\sigma$. Indeed, if $D$ contains more than one pixel we can partition it into sets $D'$ and~$D''$ which, by our choice of~$k$, both have order~$<k$ as separations. Since $P'$ has the profile property~(P) and contains~$D$, it must also contain one of~$D'$ and~$D''$.%
   \COMMENT{}
   This contradicts the choice of~$D$. Thus, the serif region~$\sigma$ contains no profile of order much bigger than~$20$. Its cohesion, therefore, is at most about~20, and its visibility again (at most) about $20-11=9$.

\medbreak

\begin{figure}[h]
  \center
    \includegraphics[scale=.25]{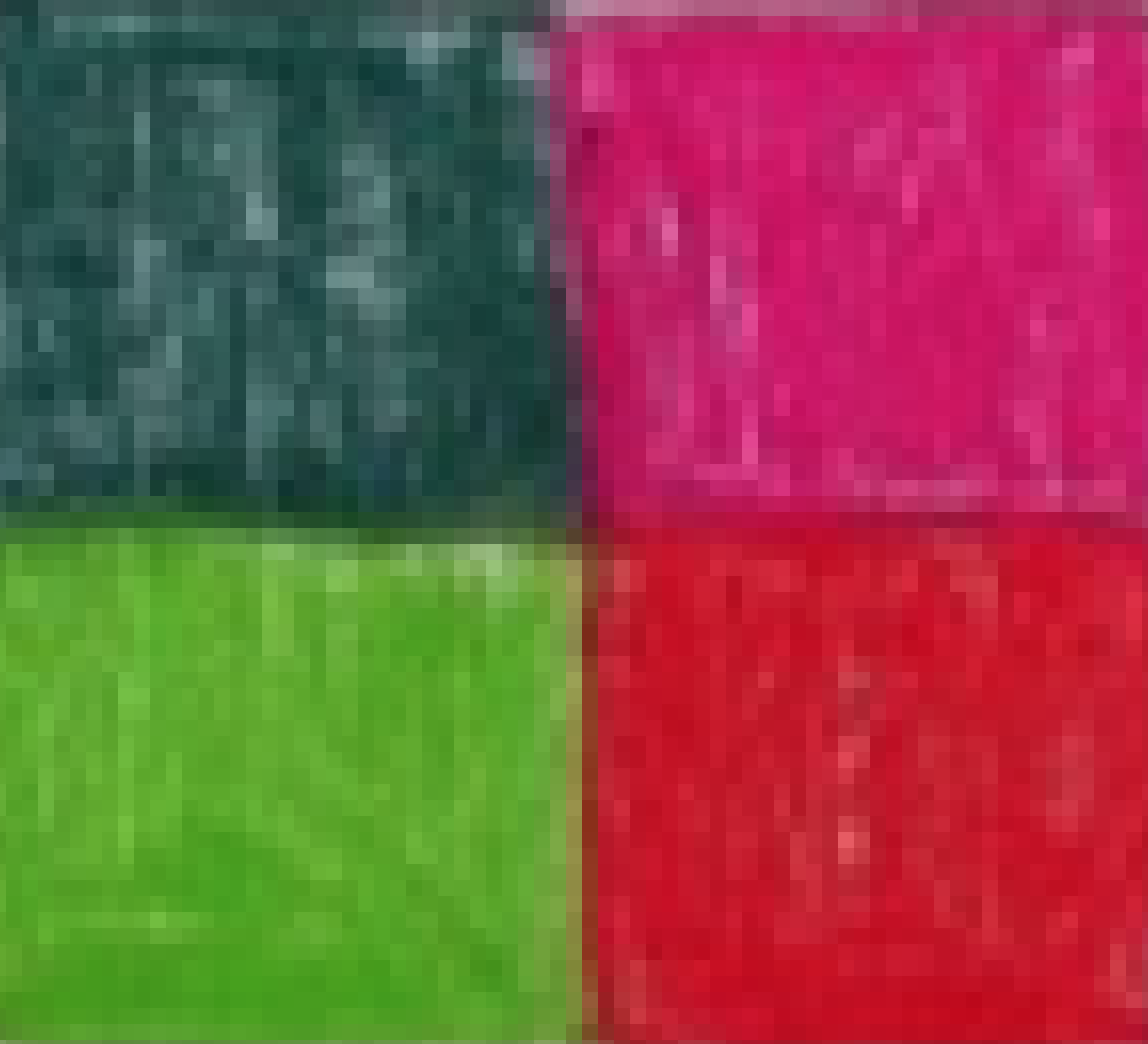}
\caption{Two regions, each refined by two smaller regions}
\label{f2}
\end{figure}

Now consider Figure~\ref{f2}: a square with four quadrants, of which the left two and the right two are similar to each other.

With a natural weighting taking into account the similarity of colours, there is a unique line of lowest order, $k$~say, which runs vertically down the middle. (We ignore all the lines $\{C,C^*\}$ with $C$ a very small set.) This vertical line $\{A,A^*\}$ partitions the picture into a set $A$ of green pixels and a set~$A^*$ of red pixels. There are unique $(k+1)$-profiles~$P$ containing~$\{A\}$ and $Q$ containing~$\{A^*\}$. These represent distinct regions $\gamma$ (for `green') and~$\rho$ (for~`red'). Since $P$ and~$Q$ induce the same $\ell$-profiles for all $\ell \le k$, these regions have complexity~$k+1$ and refine a unique region of cohesion~$k$ and complexity~1.

Let $k'>k$ be the order of the (line around) the bottom green quadrant~$A'$, and $k''>k'$ the order of the top green quadrant~$A''$. (We have $k'' > k'$, because the top two quadrants are more similar to each other than the bottom two quadrants are.) There is a unique $(k'+1)$-profile~$P'$ containing~$A'$, which extends~$P$. The region $\gamma'$ it represents refines~$\gamma$.

This region~$\gamma'$ is distinct from~$\gamma$, because there is another $(k'+1)$-profile extending~$P$ which renders $P'$ inequivalent to~$P$: the unique $(k'+1)$-profile~$P''$ which does {\em not\/} contain~$A'$ but still contains~$A$. This profile thus `points to' the top left quadrant~$A''$. But since $A''$ has order~$k''\ge k'+1$ it does not orient the separation formed by~$A''$ and its complement, so it does not contain~$A''$ as an element.

However, $P''$~is equivalent to the unique $(k''+1)$-profile that does contain~$A''$. The (dark green) region~$\gamma''$ which these two profiles represent thus has complexity~$k'+1$, the same as~$\gamma'$.

Since $P'$ and $P''$ are inequivalent $(k'+1)$-profiles both extending~$P$, the region~$\gamma$ which $P$ represents has cohesion at most~$k'$, and in fact exactly~$k'$. Since the two red quadrants differ in hue less than the two green quadrants do, the region~$\rho$ has a higher cohesion than~$\gamma$. Since all four quadrants have a similar cohesion~-- roughly, the least order of a line cutting right through a quadrant, as in the L example~-- this means that the regions corresponding to the red quadrants are less visible than those of the green quadrants, which bears out our intuition.

The proof of the canonical tangle-tree theorem will produce five nested separations to distinguish these regions: the vertical line separating green from red, and in addition one L-shaped line around each of the four quadrants.

The non-canonical version of the theorem, applied to the set $R$ of the four quadrants (which are pairwise distinguishable,%
   \COMMENT{}
   as the premise of the theorem requires), will then discard two of these latter four separations, retaining just the vertical line and a line around one green and one red quadrant. Note that these three lines still separate all four quadrants, but no subset of just two lines will.\looseness=-1

\begin{figure}[h]
  \center
    \includegraphics[scale=.27]{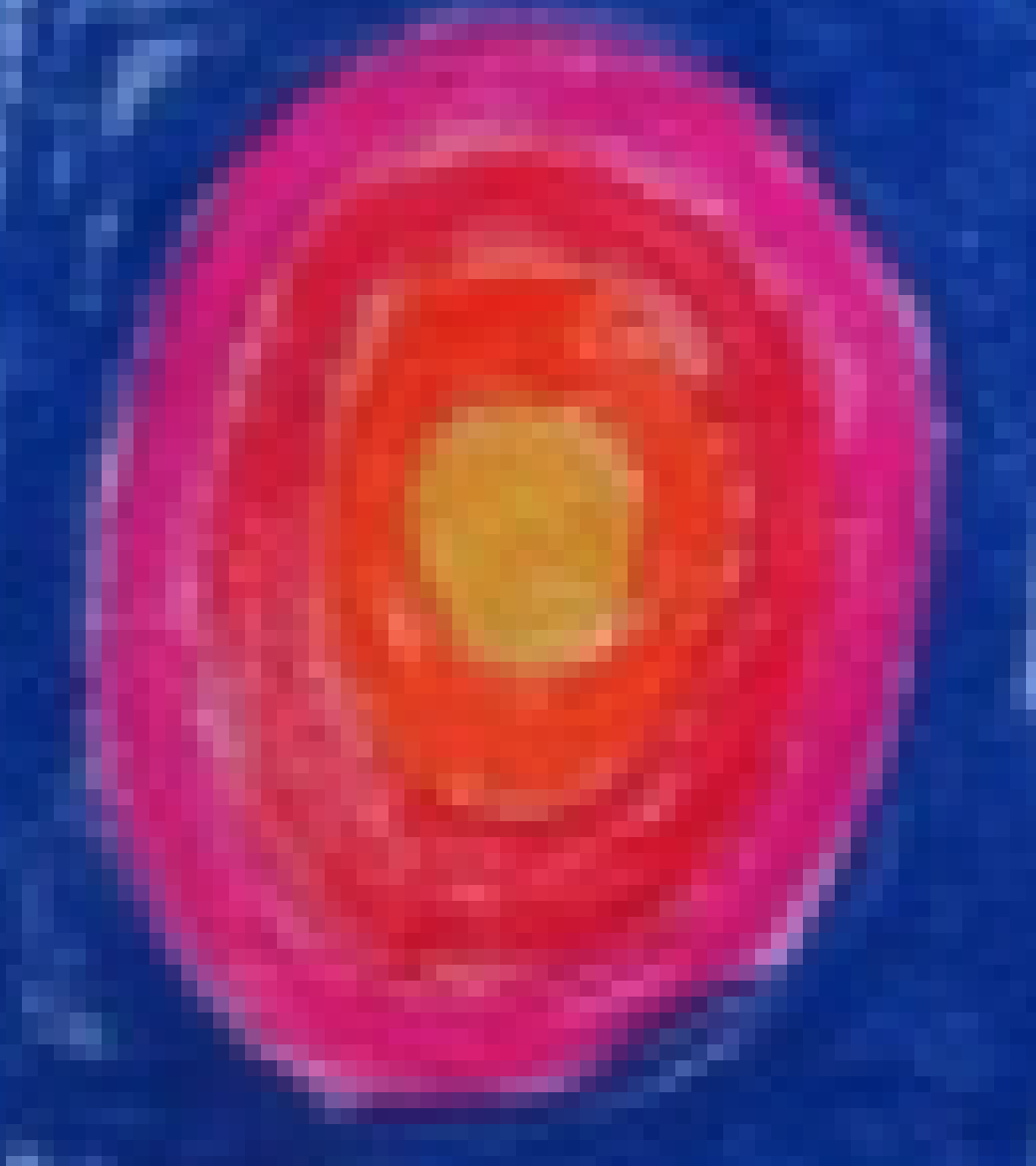}
\caption{Concentric circles have low order, radial lines have high order}
\label{f3}
\end{figure}

\medbreak

Figure~\ref{f3} is intended to illustrate which lines have low order and which do not. The lowest-order separations are the innermost and the outermost circular line between differently coloured regions. The latter has low order, because all its edges~$e$ join a blue pixel to a red one, making $\delta(e)$ large. The innermost circle has smaller values of~$\delta(e)$, but fewer edges in total, making for a similarly low order. The remaining concentric circles mark differences in hue that are about equal in degree, so the longer of these circles have larger order as separations.

The radial lines in Figure~\ref{f3}, by contrast, have maximum values of~$\delta$, since every edge joins two like pixels. Hence the blue background, the yellow innermost disc, and the red concentric bands are the only regions in this picture.

\medbreak

\begin{figure}[h]
  \center
    \includegraphics[scale=.34]{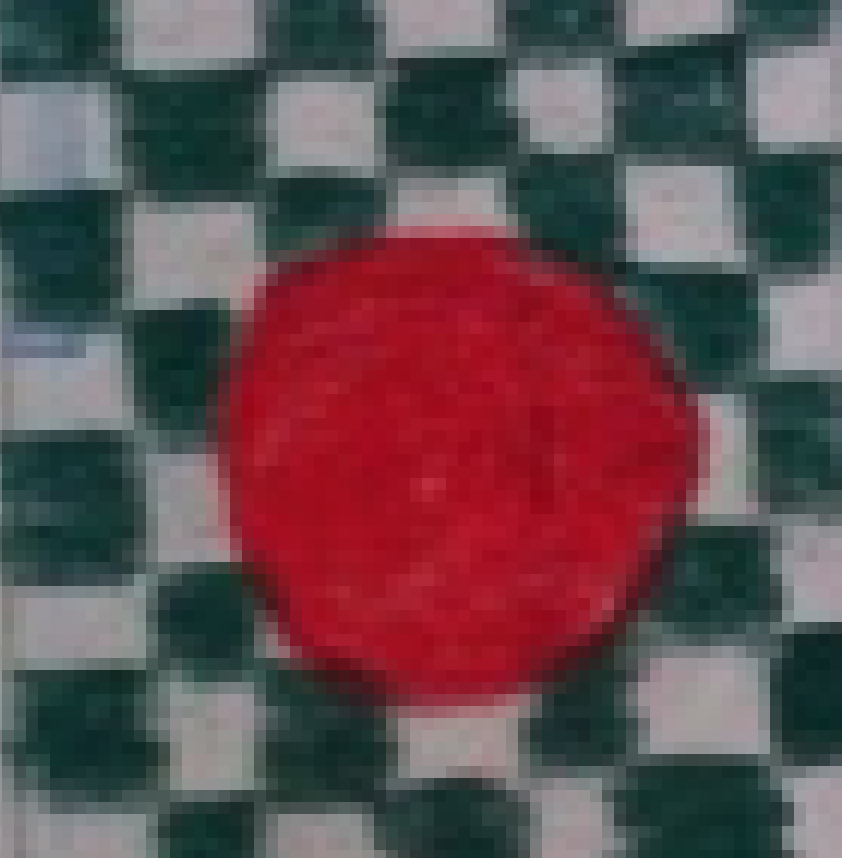}
\caption{Only one region of high cohesion}
\label{f4}
\end{figure}

The inner red disc in picture in Figure~\ref{f4} is a highly visible region, one of low complexity and large cohesion: the order of any straight line that roughly cuts it in half and otherwise runs between differently coloured squares is a lower bound for its cohesion.

The checkerboard background as such does not correspond to a region~$\rho$ in our earlier sense that, for some~$k$, every line of order~$<k$ has most of the backround on one side and the orienting all these lines towards that side defines a profile in~$\rho$. Roughly, the reason for this is that any large portion~$A$ of the background can be partitioned into subsets $A'$ and~$A''$ of at most the same order, by lines running between differently coloured squares. Hence any profile $P$ containing~$A$ must also orient $A'$ and~$A''$, i.e., contain them or their inverses. By the profile property~(P) it cannot contain both inverses as well as~$A$, so it contains at least (and hence, by consistency, exactly) one of~$A'$ and~$A''$. Continuing in this way, we find that $P$ also contains a set~$B$ that is just a small portion of the background rather than most of it, typically one square.

\medbreak

\begin{figure}[h]
  \center
    \includegraphics[scale=.5]{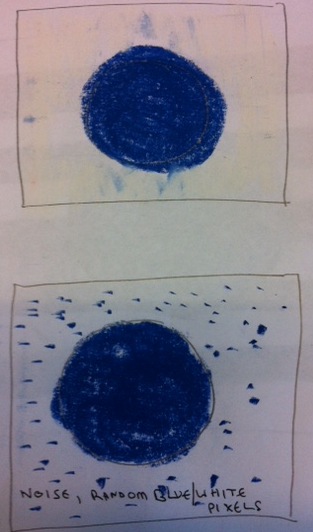}
\caption{Two regions above, only one below}
\label{f6}
\end{figure}

 In the upper image in Figure~\ref{f6}%
   \COMMENT{}
   we see a blue disc against a white background. Here
we have two highly visible regions, of low complexity and high cohesion, separated by the circular line around the blue disc. In the
second image, imagine the background as consisting of blue and white pixels whose colours are chosen independently at random, with equal probability for blue and white. Note that an edge joining two of these random pixels is as likely to join pixels of the same colour as an edge on the circle around the blue disc, which joins a blue pixel to a random pixel.%
   \COMMENT{}
   Hence, as in Figure~\ref{f4}, the central blue disc represents a region of high cohesion, but~-- unlike in the upper image~-- the background does not represent any region at all (with high probability).%
   \COMMENT{}

This changes, however, if the random background is much larger than the blue disc. As soon as $k$ is large enough that $S_k$ contains the line around the blue disc, the orientation of this line outwards is then likely to extend to various $k$-tangles each defined by some uniformly coloured disc just large enough that no line in $S_k$ can cut across it.%
   \COMMENT{}
   If our order function on lines is a little more complicated than in our example,%
   \COMMENT{}
   there will also be a unique tangle corresponding to the entire random background.%
   \COMMENT{}

\section{Outlook}

Our aim in these notes was to show that, in principle, tangles in abstract separation systems can be used to model regions of an image, with the consequence that the tangle-tree theorem and the tangle duality theorem can be used to analyse images. We are aware that our concrete model, as defined in Section~\ref{sec:apps}, amounts to no more than a proof of concept, which can be improved and fine-tuned in many ways.

Our hope is that, nonetheless, the experts in the field may find our approach sufficiently inspiring to be tempted to develop it further.

\begin{figure}[h]
  \center
    \includegraphics[scale=.9]{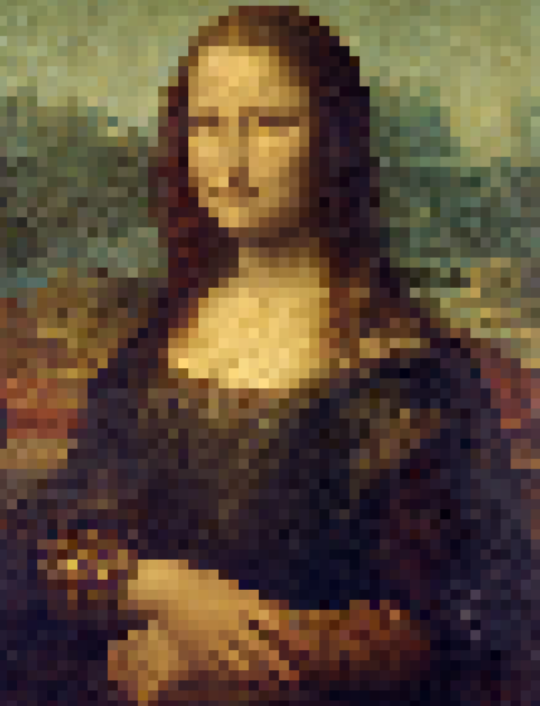}
\caption{Where are the regions of high visibility?}
\label{f7}
\end{figure}

\bibliographystyle{plain}
\bibliography{collective}

\small\vfill\noindent Version 31.10.2017

\end{document}